\def\R{\mathbb{R}}
\theoremstyle{plain}
\newtheorem{theorem}{Theorem}[section] 
\newtheorem{lemma}[theorem]{Lemma}
\theoremstyle{definition}
\theoremstyle{remark}
\newtheorem{rem}[theorem]{Remark}
\def\cA{\mathcal{A}}
\def\cI{\mathcal{I}}
\def\cO{\mathcal{O}}
\def\cX{\mathcal{X}}
\def\Re{{\textnormal{Re}}}
\def\Id{{\textnormal{Id}}}
\def\txtc{{\textnormal{c}}}
\def\txtd{{\textnormal{d}}}
\def\txte{{\textnormal{e}}}
\def\txti{{\textnormal{i}}}
\def\txtD{{\textnormal{D}}}
\def\ra{\rightarrow}
\def\I{\infty}
\newcommand{\be}{\begin{equation}}
\newcommand{\ee}{\end{equation}}
\newcommand{\benn}{\begin{equation*}}
\newcommand{\eenn}{\end{equation*}}
\newcommand{\bea}{\begin{eqnarray}}
\newcommand{\eea}{\end{eqnarray}}
\newcommand{\beann}{\begin{eqnarray*}}
\newcommand{\eeann}{\end{eqnarray*}}
\title{Scaling Laws and Warning Signs\\ for Bifurcations of SPDEs}
\author{Christian Kuehn\footnote{Faculty of Mathematics, Technical University of 
Munich, Boltzmannstr.~3, 85747 Garching b.~Munich, Germany}~~and~Francesco 
Romano\footnote{Faculty of Mathematics, Technical University of 
Munich, Boltzmannstr.~3, 85747 Garching b.~Munich, Germany \&
Ludwig-Maximilians-Universit\"at, Elite Graduate Course Theoretical and Mathematical Physics, 
Theresienstr. 37, 80333, Munich, Germany.}}
\begin{document}
\maketitle

\begin{abstract}
Critical transitions (or tipping points) are drastic sudden changes observed in many
dynamical systems. Large classes of critical transitions are associated to systems,
which drift slowly towards a bifurcation point. In the context of stochastic
ordinary differential equations (SODEs), there are results on growth of variance
and autocorrelation before a transition, which can be used as possible warning signs in
applications. A similar theory has recently been developed in the simplest setting
also for stochastic partial differential equations (SPDEs) for self-adjoint 
operators in the drift term. This setting leads to real discrete spectrum and 
growth of the covariance operator via a certain scaling law. In this paper, we 
develop this theory substantially further. We cover the cases of complex 
eigenvalues, degenerate eigenvalues as well as continuous spectrum. This provides a 
fairly comprehensive theory for most practical applications of warning signs for 
SPDE bifurcations. 
\end{abstract}

\textbf{Keywords:} critical transition, tipping point, warning sign, scaling 
law, bifurcation, fast-slow system, stochastic partial differential equation, 
spectral theory.\medskip

\section{Introduction}
\label{sec:intro}

In many areas of science we frequently observe events that appear rather 
abruptly. Some examples are epileptic 
seizures~\cite{McSharrySmithTarassenko,MormannAndzejakElgerLehnertz} 
and asthma attacks~\cite{Venegasetal} in medicine, 
market collapses in economics~\cite{HuangKouPeng,MayLevinSugihara}, 
epidemic outbreaks~\cite{OReganDrake,KuehnZschalerGross}, engineering system 
failures~\cite{CotillaSanchezHinesDanforth}, and 
population/habitat changes in ecology~\cite{Carpenteretal1,CarpenterBrock}. 
Although these \emph{critical transitions} seem 
- a priori - unrelated, there are many unifying features. The events happen rather 
\emph{fast} after a long period of \emph{slow} change, there are special 
\emph{thresholds} or \emph{tipping points} to be crossed, and \emph{stochastic 
fluctuations} are always present. Using stochastic fluctuations to estimate 
presence of, and the distance to, a tipping point has been a successful strategy
already proposed by Wiesenfeld in 1985~\cite{Wiesenfeld1} and tested in the
context of chemical experiments~\cite{KramerRoss}. One exploits
that the main deterministic driving forces near bifurcation are weakened (also 
known as \emph{critical slowing down} or \emph{intermittency}~\cite{GH}) 
and measures the thereby relatively amplified noisy fluctuations. This strategy 
has been (re-)discovered also in many application areas recently, mainly 
in ecology~\cite{CarpenterBrock,Schefferetal} and climate 
science~\cite{Lentonetal,Schefferetal}. Yet, to actually obtain predictive
power of warning signs is often highly non-trivial from a 
practical~\cite{DitlevsenJohnsen} as well as 
statistical~\cite{ZhangHallerbergKuehn,BoettingerHastings} viewpoint.\medskip 

Therefore,
a detailed mathematical theory must be developed to understand the assumptions,
limitations, and opportunities of warning signs for critical transitions better.
For systems modelled by stochastic ordinary differential equations (SODEs), 
a detailed theory can be found in~\cite{KuehnCT2}; see also~\cite{BerglundGentz}
for relevant background.
However, if we discard all spatial components, we may miss important aspects of
the theory, which could also be very important in practical 
applications~\cite{Dakosetal1,DonangeloFortDakosSchefferNes}. This leads one 
to consider stochastic partial differential equations (SPDEs), where warning
signs have only been investigated so far for propagation failure of travelling
waves numerically~\cite{KuehnFKPP} and with a combination of analytical/numerical
methods for stationary patterns in~\cite{GowdaKuehn}. The work~\cite{GowdaKuehn} 
is our main starting point. It focuses on system of the form
\be
\label{eq:SPDEintro}
\begin{array}{lcl}
\txtd u &=& Lu+f(u,p)~\txtd t+\sigma B ~\txtd W,\\
\txtd p &=& \varepsilon g(u,p)~\txtd t,
\end{array}
\ee
where $(x,t)\in\cI\times [0,\I)$, $\cI$ is an interval, $L$ is a 
spatial differential operator, $u=u(x,t)$, 
$p=p(x,t)$, the nonlinearities $f$ and $g$ are sufficiently smooth maps,
$W=W(x,t)$ is a space-time generalized Wiener process, $B$ is a given linear 
operator, $0<\sigma\ll1$ controls the
noise level and $0<\varepsilon\ll1$ is the time-scale separation between
the fast $u$-variable and the slow $v$-variable; see also Section~\ref{sec:back}
for the technical setting. Suppose $f(0,p)=0$ so that
$u_*\equiv 0$ is a homogeneous steady state for any $p$ 
for the deterministic ($\sigma=0$) partial differential equation (PDE). 
The local stability of $u_*\equiv 0$ is determined by studying the operator 
\benn
A=A(p):=L+\txtD_u f(0,p),
\eenn
where $\txtD_u$ is the Fr\'echet derivative and we have to pick a function
space to obtain a well-defined spectral problem. The
basic idea to induce a critical transition in the fast-slow 
SPDE~\eqref{eq:SPDEintro} is that the slow dynamics
\benn
\partial_t p = \varepsilon g(0,p)
\eenn
changes so that for some $p$, say $p<0$, we obtain $\textnormal{spec}(A(p))$ is 
contained in $\{z:\textnormal{Re}(z)<0\}$ while for some other $v$, say $p>0$,
the spectrum contains parts in $\{z:\textnormal{Re}(z)>0\}$. In particular,
this means the fast PDE dynamics
\benn
\partial_t u= Lu+f(u,p)
\eenn
undergoes a bifurcation at $p=0$ as $p$ is varying~\cite{Kielhoefer}. Since 
it is very difficult to control the interplay between $\sigma$, $\varepsilon$ 
and the location of $\textnormal{spec}(A(p))$~\cite{KuehnCurse}, the first natural 
approximation is to consider the fast subsystem singular limit $\varepsilon=0$ 
and just view $p$ as a parameter~\cite{KuehnBook}. In~\cite{GowdaKuehn} this
situation is considered for the linearized problem   
\be
\label{eq:SPDEintro1}
\txtd U = A(p)U~\txtd t+\sigma B \txtd W,\qquad U=U(x,t),
\ee
see also Section~\ref{sec:back}. Several further key assumptions are made
in~\cite{GowdaKuehn}:
\begin{itemize}
 \item[(GK1)] $\textnormal{spec}(A(p))$ contains eigenvalues with multiplicity one; 
 \item[(GK2)] $A(p)$ is self-adjoint;
 \item[(GK3)] the noise term is independent of $p$;
 \item[(GK4)] $\cI$ is a compact interval.
\end{itemize}  
Under these assumptions one can show~\cite{GowdaKuehn} that the covariance operator 
$\textnormal{Cov}(u)$ diverges, when projected on certain Fourier modes
as $p\ra 0^-$. One can also determine and explicit asymptotic scaling power law 
in $p$. These results are a natural generalization to SPDEs for the well-known 
fast-slow SODE setting~\cite{BerglundGentz6,KuehnCT2}.\medskip 

In this paper, we manage to drop and/or generalize all the assumptions (GK1)-(GK4).
We are going to allow for degenerate Jordan blocks lifting (GK1). We also 
consider complex eigenvalues and parameter-dependent noise thereby removing 
(GK2)-(GK3). Furthermore, we are going to consider essential spectrum frequently 
arising for differential operators on unbounded domains. In this context we consider
rather general classes of linear operators $A$. These results are a major 
generalization in contrast to classical differential operators on bounded
domains as in (GK4). The last generalization may
look slightly un-natural at first sight but it is crucial as modulation/amplitude
equations~\cite{KirrmannSchneiderMielke} for SPDEs~\cite{Bloemker} are posed 
on unbounded domains. Modulation equations can be viewed as normal forms for
local pattern formation~\cite{Hoyle}. 

Our results in this paper show that we can essentially
always expect diverging covariance for \emph{generic} noise terms. Either in the form
\be
\label{eq:res1}
\langle \textnormal{Cov}(u_{k^*}),u_{k^*}\rangle =\cO(h(p))~\text{as $p\ra 0^-$,}
\qquad \lim_{p\ra 0^-}h(p)=+\I
\ee
in the Hilbert space $H$ with inner product $\langle\cdot,\cdot\rangle$ for 
some function $u_{k^*}$ and explicitly computable $h(p)$, or more generally for
essential spectrum in the form
\be
\label{eq:res2}
\lim_{p\ra 0^-}\left\| \textnormal{Cov}(u)\right\|=+\I,
\ee
where $\|\cdot\|$ is a norm on linear operators. If the parametric dependence
is chosen so that the noise degenerates, we show that other behaviours are 
possible. For precise technical statements
we refer to Sections~\ref{sec:bounded}-\ref{sec:unbounded}. In summary, this 
completes the theory of warning signs for SPDEs bifurcating from a homogeneous 
steady state in the vast majority of cases of practical relevance.\medskip 

The paper is structured as follows: In Section~\ref{sec:back} we briefly 
present the mathematical background required for our study. In 
Section~\ref{sec:bounded}, we consider the case of discrete spectrum 
for $A=A(p)$. Here we manage to lift the 
assumptions (GK1)-(GK3) and prove a result of the form~\eqref{eq:res1}.
Then we obtain a result of the form~\eqref{eq:res2} for essential spectrum 
in Section~\ref{sec:unbounded}. The proof shows when
we can characterize the precise scaling laws also for essential 
spectrum as stated in~\eqref{eq:res1}.

\section{Background and Framework}
\label{sec:back} 

Consider an evolution equation on a Hilbert space $H$ of the form
\begin{equation}
\label{eqn:linsysb}
\partial_t U=A(p)U,\qquad U=U(t),~p\in\R,
\end{equation}
for a linear operator $A=A(p): \mathcal{D}(A) \subset H \rightarrow H$. Assume
that $A$ is the infinitesimal generator of a strongly continuous semigroup 
$\txte^{tA}$~\cite{Pazy}. Suppose the steady state $U_*=0$ is stable 
for~\eqref{eqn:linsysb} for $p<0$, i.e., the spectrum of $A(p)$ is contained 
in the left half of the complex plane for $p<0$. Suppose at $p=0$ the spectrum 
crosses the imaginary axis $\txti \R$, so that we have an instability that we
interpret as the linearized problem for the drift part of~\eqref{eq:SPDEintro1}. 

Next, we briefly introduce the framework for SPDEs we need 
from~\cite{DaPratoZabczyk}. Consider a filtered probability 
space $(\Omega, \mathcal{F}, \mathcal{F}_t, \mathbb{P})$ and a non-negative 
self-adjoint trace class operator $Q$ on a Hilbert space $H$. By the spectral 
theorem, $Q$ has a countable orthonormal basis $\{q_k\}$ of eigenfunctions, 
with corresponding eigenvalues $\rho_k \geq 0$ such that $Q q_k = \rho_k q_k$. A 
stochastic process $W$ is a $Q$-Wiener process on $H$ if 
\benn
W_t=\sum_{j=1}^\infty \sqrt{\rho_j} \, q_j \, \beta^j_t,\qquad \textnormal{a.s.},
\eenn
where $\beta^j=\beta^j(t)$ are independent and identically distributed 
$\mathcal{F}_t$-adapted Brownian motions and the series converges in 
$L^2(\Omega, H)$. The identity matrix $I$ is not a trace class operator. 
Nevertheless, one can (uniquely) construct a Wiener process with 
covariance matrix that is not trace class by showing that the series
\begin{equation}
\label{eqn:genwiener}
W_t=\sum_{j=1}^\infty Q^{1/2} \, q_j \, \beta^j_t
\end{equation}
converges in a larger Hilbert space $H_1$ (in particular, $H_1$ has to be 
such that the embedding $J: Q^{1/2} H \rightarrow H_1$ is a Hilbert-Schmidt 
operator, see \cite[Prop.~4.7]{DaPratoZabczyk}). The processes 
defined by convergence of the series~\eqref{eqn:genwiener} are 
called \textit{generalized Wiener processes}. A \textit{cylindrical 
Wiener process} (or space-time white noise) is the generalized 
Wiener process with covariance matrix $I$. One can then define integration 
with respect to $Q$-Wiener processes and generalized Wiener processes. 
A general linear additive-noise SPDE can be written in the following form
\begin{equation}
\label{eqn:linSPDEb}
\txtd U=AU~ \txtd t+ \sigma B ~\txtd W_t \qquad U(0)=U_0,
\end{equation} 
where we assume $B \in L_0^2$ with $L^2_0$ denoting the space of Hilbert-Schmidt
operators~\cite{DaPratoZabczyk} and that $U_0$ is a $\mathcal{F}_0$-measurable 
random variable. An $H$-valued predictable process $\{U(t)\}_{t \in [0,T]}$ is 
called a \emph{mild solution} of~\ref{eqn:linSPDEb} if a.s.
\begin{equation}
\label{eqn:mild}
U(t)=\txte^{tA} U_0 + \sigma \int_0^t \txte^{(t-s)A} B~ \txtd W_s.
\end{equation}
Under the assumptions above, mild solutions are guaranteed to exist 
uniquely \cite[Thm.~5.4]{DaPratoZabczyk}. Since we assume that 
$\txte^{tA}U_0$ decay exponentially for $p<0$ and we always take the limit
$p\ra 0^-$, we directly start on the deterministic steady state from now
on and assume 
\benn
U(0)=U_0\equiv 0. 
\eenn
We have the following expression 
for the covariance operator of the second term in~\eqref{eqn:mild} 
as given in~\cite[Thm.~5.2]{DaPratoZabczyk}
\begin{equation}
\label{eqn:covarianceadd}
V(t):=\textnormal{Cov} \Big ( \sigma \int_0^t \txte^{(t-s)A} B ~\txtd W_s \Big)
= \sigma ^2 \int_0^t \txte^{\tau A} BQB^* \txte^{\tau A^*} ~\txtd \tau,
\end{equation}
where $B^*$ denotes the adjoint of $B$. The asymptotic limit 
$V_\infty:=\lim_{t \rightarrow \infty} V(t)=\lim_{t \rightarrow \infty} 
\textnormal{Cov}(U(t))$ satisfies the Lyapunov equation
\begin{equation}
\langle AV_\infty g,h \rangle + \langle V_\infty A^* g, h \rangle 
= - \sigma ^2 \langle BQB^* g, h \rangle
\end{equation} 
for all $h, g$ such that the expression is well defined; 
see~\cite[Lem.~2.45]{DaPrato1}. Hence, we must study the different
behaviours of $V(t)$, respectively $V_\infty$, as $p\ra 0^-$ to understand 
the scaling of the covariance to leading-order as we approach the transition at 
$p=0$~\cite{GowdaKuehn}.

\section{Discrete Spectrum}
\label{sec:bounded}

We start by considering the problem discrete spectrum, motivated by many 
classical differential operators $A$ on bounded domains. 
Our goal is to generalize the following result already obtained 
in~\cite{GowdaKuehn}:
\begin{theorem}
\label{thm:kuehngowda}
Consider~\eqref{eqn:linSPDEb} with
\benn
A=p\;\Id+\cA
\eenn
where $\cA$ has a discrete real spectrum with eigenvalues $\lambda_k \leq 0$, 
eigenfunctions $u_k$ and that there exists a unique $k^*$ such that 
$\lambda_{k^*} = 0$. Also assume the genericity condition 
$\langle BQB^* u_{k^*}, u_{k^*} \rangle \neq 0$ to be satisfied. 
Then, the covariance operator $V(t)$ satisfies 
\begin{equation}
\Big \langle \lim_{t \rightarrow \infty} V(t) u_k, u_j \Big \rangle 
= -\sigma^2 \frac{\langle BQB^* u_k, u_j \rangle}{2p+ \lambda_k + \lambda_j} 
\quad \forall j,k \in \mathbb{N}
\end{equation}
and in particular
\begin{equation}
\Big \langle \lim_{t \rightarrow \infty} V(t) u_{k^*}, u_{k^*} \Big \rangle 
= \mathcal{O} \left(\frac{1}{p} \right) \; \quad \text{as} \; p \rightarrow 0^-.
\end{equation}
\end{theorem}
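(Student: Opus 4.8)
The plan is to evaluate the covariance operator in \eqref{eqn:covarianceadd} explicitly in the orthonormal eigenbasis $\{u_k\}$ of $\cA$ and then pass to the limit $t\to\I$ entrywise. First I would record that $A=p\,\Id+\cA$ acts diagonally on this basis: since $\cA$ is self-adjoint with $\cA u_k=\lambda_k u_k$ and $\lambda_k\in\R$, one also has $A^* u_k=(p+\lambda_k)u_k$, hence $\txte^{\tau A}u_k=\txte^{\tau(p+\lambda_k)}u_k$ and $\txte^{\tau A^*}u_k=\txte^{\tau(p+\lambda_k)}u_k$. Substituting into \eqref{eqn:covarianceadd}, using $\langle \txte^{\tau A}x,y\rangle=\langle x,\txte^{\tau A^*}y\rangle$, and pulling the resulting scalars out of the inner product gives
\benn
\langle V(t)u_k,u_j\rangle=\sigma^2\,\langle BQB^* u_k,u_j\rangle\int_0^t \txte^{\tau(2p+\lambda_k+\lambda_j)}\,\txtd\tau .
\eenn

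Next I would evaluate the elementary integral as $\big(\txte^{t(2p+\lambda_k+\lambda_j)}-1\big)/(2p+\lambda_k+\lambda_j)$ and observe that, because $\lambda_k,\lambda_j\le 0$, the denominator is strictly negative whenever $p<0$; therefore the exponential contribution vanishes as $t\to\I$ and
\benn
\Big\langle \lim_{t\to\I}V(t)u_k,u_j\Big\rangle=-\sigma^2\,\frac{\langle BQB^* u_k,u_j\rangle}{2p+\lambda_k+\lambda_j},\qquad j,k\in\N .
\eenn
Specializing to $k=j=k^*$ and using $\lambda_{k^*}=0$ yields the value $-\sigma^2\langle BQB^* u_{k^*},u_{k^*}\rangle/(2p)$, and the genericity hypothesis $\langle BQB^* u_{k^*},u_{k^*}\rangle\neq 0$ then forces this quantity to be $\cO(1/p)$ as $p\to 0^-$. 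One may cross-check the formula by inserting the ansatz $\langle V_\infty u_k,u_j\rangle=c_{kj}$ into the Lyapunov equation, where the diagonal action of $A$ and $A^*$ immediately produces $(2p+\lambda_k+\lambda_j)c_{kj}=-\sigma^2\langle BQB^* u_k,u_j\rangle$, recovering the same coefficients.

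The conceptual content above is short; the work lies in the functional-analytic bookkeeping, which I expect to be the main obstacle to a fully rigorous writeup rather than any genuine difficulty. One must check that $u_k\in\mathcal{D}(A)\cap\mathcal{D}(A^*)$ so that the diagonal action of the semigroups is legitimate, that $B\in L^2_0$ together with $Q$ trace class makes $BQB^*$ trace class so that the inner products and the scalar extractions are justified, that $\textnormal{spec}(A(p))\subset\{\Re z<0\}$ for $p<0$ makes the integrand dominated uniformly in $t$ so the limit may be taken under the integral, and that the resulting $\lim_{t\to\I}V(t)$ exists in the weak (entrywise) sense used in the statement. Finally I would note that $\langle BQB^* u_{k^*},u_{k^*}\rangle=\langle QB^* u_{k^*},B^* u_{k^*}\rangle\ge 0$ automatically, so the genericity condition merely excludes the degenerate value $0$ and, in particular, fixes the sign of the divergence.
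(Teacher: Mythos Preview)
Your argument is correct. The paper itself does not prove this theorem; it simply cites \cite{GowdaKuehn}, Proposition~3.1, so there is no in-paper proof to compare against line by line.

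That said, your approach matches the spirit of the surrounding material: your direct evaluation of the integral in~\eqref{eqn:covarianceadd} on the eigenbasis is the natural computation, and your Lyapunov-equation cross-check is exactly the device the paper uses to prove the subsequent generalizations (complex eigenvalues, Jordan blocks, parameter-dependent noise). In fact, the Lyapunov route alone already suffices and is slightly cleaner here, since it bypasses the dominated-convergence bookkeeping you flag; you could lead with it and mention the integral formula only as motivation. One small remark: you implicitly use that $\cA$ is self-adjoint (so that $A=A^*$ and the $u_k$ are orthonormal), which is part of the standing hypotheses (GK2) recalled just before the theorem rather than stated in the theorem itself; it is worth making that dependence explicit in your writeup.
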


\begin{proof}
See \cite{GowdaKuehn}, Proposition 3.1.
\end{proof}

The assumptions on the operator $\cA$ guarantee that the spectrum of 
$A=(p\;\Id + \cA)$ is strictly contained in $(-\I,0)$ for $p < 0$. For $p = 0$, 
the spectrum $\textnormal{spec}(A)$ contains the point $0$, which corresponds to the 
eigenfunction $u_{k^*}$. In the language of dynamical systems, in this 
case the steady state $u_*\equiv 0$ is non-hyperbolic and a center manifold 
$W^\txtc_{\textnormal{loc}}(0)$ appears. Being linear, the center manifold is 
explicitly given by the linear subspace $W^\txtc_{\textnormal{loc}}(0)=
\textnormal{span}\{u_{k^*}\}$. The asymptotic result in 
Theorem~\ref{thm:kuehngowda} can then be restated as saying that the component 
of the covariance operator along the center manifold diverges as the critical 
transition is approached. Hence, this is a very natural first analog to the 
results for SODEs in~\cite{KuehnCT2}.

\subsection{Imaginary eigenvalues}

As a first step, we relax the real discrete spectrum assumption on the operator 
$A$ to obtain:
\begin{theorem}
Consider the SPDE~\eqref{eqn:linSPDEb}, i.e., 
\benn
d U=AU~ \txtd t + \sigma B~ \txtd W. 
\eenn
Suppose $A=A(p)$ has a discrete spectrum with eigenvalues $\lambda_k(p)$ 
with $\Re(\lambda_k(p)) < 0$ for all $k$ and $p<0$, and eigenfunctions 
$u_k$. If $k^*$ is such that $\lambda_{k^*}$ is a purely imaginary eigenvalue 
for $p^*=0$ and the genericity condition $\langle BQB^* u_{k^*}, u_{k^*} 
\rangle \neq 0$ is satisfied, the covariance operator $V(t)$ satisfies 
\begin{equation}
\label{eq:complex1}
\left\langle \lim_{t \rightarrow \infty} V(t) u_k, u_j \right\rangle 
= -\sigma^2 \frac{\langle BQB^* u_k, u_j \rangle}{\lambda_k+ \bar{\lambda}_j} 
\quad \forall j,k \in \mathbb{N}
\end{equation}
where $\bar{\lambda}_j$ is the complex conjugate of $\lambda_j$. In particular,
we find
\begin{equation}
\label{eq:complex2}
\left\langle \lim_{t \rightarrow \infty} V(t) u_{k^*}, u_{k^*} \right\rangle 
= \mathcal{O} \left(\frac{1}{\Re(\lambda_{k^*})} \right) \;\quad \text{ as}
 \; p \rightarrow 0^-.
\end{equation}
\end{theorem}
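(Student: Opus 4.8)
The plan is to run the computation that underlies Theorem~\ref{thm:kuehngowda} (\cite{GowdaKuehn}, Prop.~3.1), the only new feature being that the eigenvalues are now complex. I start from the explicit covariance formula~\eqref{eqn:covarianceadd},
\benn
V(t)=\sigma^2\int_0^t \txte^{\tau A}\,BQB^*\,\txte^{\tau A^*}~\txtd\tau,
\eenn
and evaluate its matrix elements $\langle V(t)u_k,u_j\rangle$. Since $Au_k=\lambda_k u_k$ one has $\txte^{\tau A}u_k=\txte^{\tau\lambda_k}u_k$; and since $u_j$ is simultaneously an eigenfunction of the adjoint with eigenvalue $\bar\lambda_j$ --- the natural substitute for the self-adjointness hypothesis (GK2), automatic e.g.\ when $A$ is normal --- one has $\txte^{\tau A^*}u_j=\txte^{\tau\bar\lambda_j}u_j$. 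Passing both semigroups onto the eigenfunctions then collapses the integrand to the scalar $\txte^{\tau(\lambda_k+\bar\lambda_j)}\langle BQB^*u_k,u_j\rangle$, the conjugate $\bar\lambda_j$ being precisely the contribution of the adjoint semigroup on the $u_j$-slot.

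Next I would use that for $p<0$ one has $\Re(\lambda_k)<0$ and $\Re(\lambda_j)<0$, hence $\Re(\lambda_k+\bar\lambda_j)<0$, so the scalar integrand is integrable on $[0,\I)$ and
\benn
\lim_{t\to\I}\langle V(t)u_k,u_j\rangle
=\sigma^2\langle BQB^*u_k,u_j\rangle\int_0^\I \txte^{\tau(\lambda_k+\bar\lambda_j)}~\txtd\tau
=-\sigma^2\,\frac{\langle BQB^*u_k,u_j\rangle}{\lambda_k+\bar\lambda_j},
\eenn
which, once one checks that the limit may be taken inside the pairing, is~\eqref{eq:complex1}. As a consistency check, the same identity also drops out of the Lyapunov equation of Section~\ref{sec:back} on testing it with $g=u_k$, $h=u_j$ and using $A^*u_k=\bar\lambda_k u_k$, $A^*u_j=\bar\lambda_j u_j$.

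For the scaling law~\eqref{eq:complex2} I specialize to $k=j=k^*$, so that $\lambda_{k^*}+\bar\lambda_{k^*}=2\Re(\lambda_{k^*})$ and
\benn
\left\langle\lim_{t\to\I}V(t)u_{k^*},u_{k^*}\right\rangle
=-\sigma^2\,\frac{\langle BQB^*u_{k^*},u_{k^*}\rangle}{2\,\Re(\lambda_{k^*})}.
\eenn
The numerator is a fixed number, nonzero by the genericity hypothesis and in fact strictly positive because $BQB^*\geq 0$. Since $\lambda_{k^*}=\lambda_{k^*}(p)$ depends continuously on $p$ with $\Re(\lambda_{k^*}(p))<0$ for $p<0$ and $\Re(\lambda_{k^*}(0))=0$, the right-hand side is of order $1/\Re(\lambda_{k^*})$ and diverges to $+\I$ as $p\to 0^-$, which is~\eqref{eq:complex2}; as in the self-adjoint case, this is the component of the covariance along the neutral mode $u_{k^*}$ that blows up.

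The only real obstacle I anticipate is operator-theoretic bookkeeping rather than any hard estimate: one has to fix precisely the hypothesis on $A=A(p)$ that makes $\txte^{\tau A^*}u_j=\txte^{\tau\bar\lambda_j}u_j$ legitimate --- normality of $A$, or equivalently that $\{u_k\}$ is an orthonormal basis of joint eigenfunctions of $A$ and $A^*$ --- and verify that the termwise limit $t\to\I$ commutes with the pairing against $u_k,u_j$. A minor but genuine point is that $\lambda_{k^*}(p)$ need not be real for $p<0$, so it is honestly $\Re(\lambda_{k^*})$, and not $p$, that governs the divergence in~\eqref{eq:complex2}; converting this into an explicit power law in $p$, as in Theorem~\ref{thm:kuehngowda}, requires the additional input that $\Re(\lambda_{k^*}(p))$ vanishes to first order in $p$.
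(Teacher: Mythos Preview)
Your proposal is correct. The paper proves this theorem entirely via the Lyapunov equation
\benn
\langle AV_\infty u_k,u_j\rangle+\langle V_\infty A^*u_k,u_j\rangle=-\sigma^2\langle BQB^*u_k,u_j\rangle,
\eenn
immediately extracting $(\lambda_j+\bar\lambda_k)\langle V_\infty u_k,u_j\rangle$ on the left and then specializing to $j=k=k^*$. You instead take the integral representation~\eqref{eqn:covarianceadd} as primary and compute the scalar time integral $\int_0^\infty\txte^{\tau(\lambda_k+\bar\lambda_j)}\,\txtd\tau$ directly, mentioning the Lyapunov identity only as a cross-check. Both routes are short and rest on exactly the same implicit hypothesis---that the $u_k$ are simultaneously eigenfunctions of $A^*$ with eigenvalues $\bar\lambda_k$, i.e.\ that $A$ is normal---which you flag explicitly and the paper leaves tacit. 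The Lyapunov route is marginally cleaner in that it sidesteps the need to justify interchanging the $t\to\infty$ limit with the pairing; your route, on the other hand, makes the origin of the complex conjugate $\bar\lambda_j$ (from the adjoint semigroup $\txte^{\tau A^*}$) completely transparent, and your closing remarks on the normality hypothesis and on why the scaling is in $\Re(\lambda_{k^*})$ rather than in $p$ are more careful than what the paper records.
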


\begin{proof}
The proof is a calculation using the Lyapunov equation 
\begin{equation*}
\langle AV_\infty g,h \rangle + \langle V_\infty A^* g, h \rangle 
= - \sigma ^2 \langle BQB^* g, h \rangle
\end{equation*}
which holds, in particular, for the eigenfunctions $u_k$. Therefore, 
we obtain 
\begin{align*}
\langle AV_\infty u_k,u_j \rangle + \langle V_\infty A^* u_k, u_j \rangle & 
= - \sigma ^2 \langle BQB^* u_k, u_j \rangle, \\
\Rightarrow\quad \lambda_j \langle V_\infty u_k,u_j \rangle 
+ \bar{\lambda}_k \langle V_\infty 
u_k, u_j \rangle & = - \sigma ^2 \langle BQB^* u_k, u_j \rangle, \\
\Rightarrow\quad (\lambda_j + \bar{\lambda}_k) \langle V_\infty u_k, u_j 
\rangle & = - \sigma ^2 \langle BQB^* u_k, u_j \rangle.
\end{align*}
This proves the first claim~\eqref{eq:complex1}. Setting $k=j$ one has
\begin{align*}
\langle V_\infty u_j, u_j \rangle & = - \sigma ^2 
\frac{\langle BQB^* u_j, u_j \rangle}{2\Re(\lambda_j) }
\end{align*}
so for $j=k^*$ the second claim~\eqref{eq:complex2} also follows.
\end{proof}

\subsection{Jordan blocks}

In the previous sections, we have shown divergence in the variance 
along the component corresponding to eigenfunctions of the operator $A$ 
corresponding to the eigenvalue crossing the imaginary axis. In the 
case $A$ has Jordan blocks, one might ask whether such behavior is 
also observed, when projecting along the generalized eigenfunctions 
$\{u_{k^*}^l \}_{l=1,...,m_{k^*}}$. Here, $m_{k^*}$ denotes the dimension 
of the Jordan block corresponding to $\lambda_{k^*}$. For arbitrary $k$, 
setting $u_k^0:=0$ we have the formula $$Au_k^l=u_k^{l-1} 
+ \lambda_k u_k^l.$$ We find that the variance diverges also along 
generalized eigenfunctions, with the rate of divergence depending on 
the order $l$ of the correspondent generalized eigenfunction.

\begin{theorem}
Consider~\eqref{eqn:linSPDEb} and suppose $A=A(p)$ has a discrete 
spectrum with eigenvalues $\lambda_k$ with $\Re(\lambda_k) < 0$ for 
all $k$ and $p<0$. Further assume that $k^*$ is such that 
$\lambda_{k^*}$ is a purely imaginary eigenvalue for $p^*=0$ 
with generalized eigenfunctions $$\{u_{k^*}^l \}_{l=1,...,m_{k^*}}.$$
If the genericity conditions $\langle BQB^* u_{k^*}^1, u_{k^*}^1 
\rangle \neq 0$ is satisfied, the covariance operator $V(t)$ satisfies 
\begin{equation}
\left\langle \lim_{t \rightarrow \infty} V(t) u_{k^*}^l, u_{k^*}^m 
\right\rangle = \mathcal{O} \left(\frac{1}{\Re(\lambda_{k^*})^{l+m-1}} 
\right) \quad \textnormal{ as } p \rightarrow 0^-
\end{equation}
for each $l,m \geq 1$.
\end{theorem}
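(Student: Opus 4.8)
The plan is to repeat the Lyapunov-equation argument of the two previous proofs, but now the Jordan structure couples the different orders of the chain, so the single algebraic identity is replaced by a triangular recursion which I solve by induction on $l+m$. Abbreviate $\lambda:=\lambda_{k^*}$, $u^l:=u_{k^*}^l$ (with $u^0:=0$), $R:=BQB^*$, $V_\infty:=\lim_{t\to\I}V(t)$ and $V_{l,m}:=\langle V_\infty u^l,u^m\rangle$; since $\Re(\lambda_k)<0$ for all $k$ when $p<0$ the operator $V_\infty$ is well defined, and $\Re(\lambda)\to0^-$ as $p\to0^-$. Alongside $Au^l=u^{l-1}+\lambda u^l$ I use the conjugate Jordan relation $A^*u^l=u^{l-1}+\bar\lambda u^l$, the Jordan-block analogue of the identity $A^*u_k=\bar\lambda_ku_k$ already used above for simple eigenvalues (it expresses that the generalized eigenfunctions of $A$ and $A^*$ are compatibly paired). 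Substituting $g=u^l$, $h=u^m$ into the Lyapunov equation of Section~\ref{sec:back} and rewriting $\langle AV_\infty u^l,u^m\rangle=\langle V_\infty u^l,A^*u^m\rangle$ produces $(\lambda+\bar\lambda)V_{l,m}+V_{l,m-1}+V_{l-1,m}=-\sigma^2\langle Ru^l,u^m\rangle$, i.e.
\[
2\Re(\lambda)\,V_{l,m}=-\sigma^2\langle Ru^l,u^m\rangle-V_{l,m-1}-V_{l-1,m},\qquad V_{0,m}=V_{l,0}=0.
\]

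I then induct on $l+m$. The base case $l=m=1$ reads $V_{1,1}=-\sigma^2\langle Ru^1,u^1\rangle/(2\Re(\lambda))=\mathcal{O}(1/\Re(\lambda))$, and the genericity hypothesis $\langle Ru^1,u^1\rangle\neq0$ makes this a genuine divergence of order exactly one. For the step, assume $V_{a,b}=\mathcal{O}(\Re(\lambda)^{-(a+b-1)})$ whenever $a+b<l+m$; since the finitely many coefficients $\langle Ru^a,u^b\rangle$ ($a,b\le m_{k^*}$) stay bounded as $p\to0^-$, the right-hand side of the recursion is $\mathcal{O}(\Re(\lambda)^{-(l+m-2)})$, and dividing by $2\Re(\lambda)\to0^-$ gives $V_{l,m}=\mathcal{O}(\Re(\lambda)^{-(l+m-1)})$, which is the claim. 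Unwinding the recursion shows that $\langle Ru^1,u^1\rangle$ is the only datum carried at the top power, so the rate is sharp, with leading term $\sigma^2\binom{l+m-2}{l-1}\langle Ru^1,u^1\rangle\,(-2\Re(\lambda))^{-(l+m-1)}$. Equivalently, one may bypass the recursion and compute directly from $V_\infty=\sigma^2\int_0^\I\txte^{\tau A}R\,\txte^{\tau A^*}\,\txtd\tau$, inserting $\txte^{\tau A^*}u^l=\txte^{\bar\lambda\tau}\sum_{i=0}^{l-1}\tfrac{\tau^i}{i!}u^{l-i}$ and $\int_0^\I\tau^n\txte^{2\Re(\lambda)\tau}\,\txtd\tau=n!\,(-2\Re(\lambda))^{-(n+1)}$ to obtain $V_{l,m}=\sigma^2\sum_{i=0}^{l-1}\sum_{j=0}^{m-1}\tfrac{(i+j)!}{i!\,j!}\langle Ru^{l-i},u^{m-j}\rangle\,(-2\Re(\lambda))^{-(i+j+1)}$, whose dominant term is again the one with $i=l-1$, $j=m-1$.

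The main obstacle is the functional-analytic bookkeeping rather than the algebra. One must check that $u_{k^*}^l$ lies in a domain where both the Lyapunov identity and the step $\langle AV_\infty u^l,u^m\rangle=\langle V_\infty u^l,A^*u^m\rangle$ are legitimate (the Lyapunov equation in Section~\ref{sec:back} is stated only for test functions for which both sides make sense), and — more delicately — one must fix the direction of the Jordan chain of $A^*$: the exponent $l+m-1$ is exactly what $A^*u^l=u^{l-1}+\bar\lambda u^l$ yields, whereas the reversed pairing would give $2m_{k^*}-l-m+1$ instead. Finally, to make the $\mathcal{O}$-estimate uniform in $p$ one needs the mild regularity that $B$ and the generalized eigenfunctions $u_{k^*}^l$ depend continuously on $p$ up to $p=0$, so that the constants $\sup_{p\le0}|\langle BQB^*u_{k^*}^a,u_{k^*}^b\rangle|$ with $a,b\le m_{k^*}$ are finite.
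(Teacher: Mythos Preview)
Your argument is essentially the same as the paper's: both proceed by induction on $l+m$, using the Lyapunov equation together with the Jordan relation to obtain the recursion $2\Re(\lambda)V_{l,m}+V_{l-1,m}+V_{l,m-1}=-\sigma^2\langle BQB^*u^l,u^m\rangle$ and then reading off the order of divergence. Your additional explicit formula via $V_\infty=\sigma^2\int_0^\infty \txte^{\tau A}R\,\txte^{\tau A^*}\,\txtd\tau$ and your remarks on the domain issues and the orientation of the adjoint Jordan chain go beyond what the paper records, but the core proof is identical.
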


\begin{proof}
We aim to prove it by induction on $l+m$. First of all suppose $l+m=2$. 
Then the only non-trivial case is $l=m=1$ and the claim has already been 
proven. Therefore we have the first step for induction. We then assume 
the claim holds for all $l,m$ s.t.~$l+m \leq n$ and we want to prove it 
for all $l,m$ s.t.~$l+m = n+1$. Fix such $l$ and $m$. The Lyapunov equation 
implies
\benn
2 \Re(\lambda_k) \langle V_\infty u_k^l, u_k^m \rangle 
+ \langle V_\infty u_k^{l-1}, u_k^m \rangle 
+ \langle V_\infty u_k^l, u_k^{m-1} \rangle 
= - \sigma ^2 \langle BQB^* u_k^l, u_k^m \rangle. 
\eenn
Using the induction assumption $l+m \leq n$ for on the last two terms on 
the right-hand side, we get
\benn
2 \Re(\lambda_k) \langle V_\infty u_k^l, u_k^m \rangle 
= \cO \left( \frac{1}{Re(\lambda_k)^{l+m-2} } \right) 
- \sigma ^2 \langle BQB^* u_k^l, u_k^m \rangle.
\eenn
Therefore, we may conclude that
\begin{equation*}
\langle V_\infty u_k^l, u_k^m \rangle = \cO 
\left( \frac{1}{\Re(\lambda_k)^{l+m-1} } \right) 
- \sigma ^2 \frac{\langle BQB^* u_k^l, u_k^m \rangle}{2 \Re(\lambda_k)} 
= \cO \left( \frac{1}{\Re(\lambda_k)^{l+m-1} } \right),
\end{equation*}
which proves the claim.
\end{proof}

\subsection{Noise and operator dependent on a parameter}

Another interesting case to study is when both $A$ and $\sigma$ depend 
on the parameter $p$. We expect that, if the noise near the bifurcation 
is too small, the variance does not diverge anymore. Indeed, for constant 
noise we observe that the system exhibits slow recovery when the critical 
transition is approached. If the noise decreases too fast, this could 
balance the critical slowing down and prevent the divergence of the variance. 
We show in the next result that it is enough to guarantee 
$\sigma^2 \gg \lambda_{k^*}$ to avoid such a problem.

\begin{theorem}
Consider the SPDE
\begin{equation}
\txtd U=AU~ \txtd t+\sigma(p) B~ \txtd W.
\end{equation}
Assume that $A=A(p)$ has a discrete spectrum with eigenvalues 
$\lambda_k$ with $\Re(\lambda_k) < 0$ for all $k$ and $p<0$, 
and eigenfunctions $u_k$. Assume $k^*$ is such that $\lambda_{k^*}$ 
is a purely imaginary eigenvalue for $p^*=0$ and set $$\Xi:
=\lim_{p \rightarrow 0^-} \frac{\sigma^2(p)}{2 \lambda_{k^*}(p)}.$$ 
The following holds:
\begin{equation}
\lim_{p \rightarrow 0^-} \langle V_\infty u_{k^*}, u_{k^*} \rangle 
= \langle BQB^* u_{k^*}, u_{k^*} \rangle \; \Xi.
\end{equation}
In particular, if we also assume the genericity condition 
$\langle BQB^* u_{k^*}, u_{k^*} \rangle \neq 0$ we have
\begin{equation}
\langle V_\infty u_{k^*}, u_{k^*} \rangle = \cO \left(\frac{\sigma^2}{\lambda_{k^*}} 
\right) \quad \textnormal{ as } p \rightarrow 0^-.
\end{equation}
\end{theorem}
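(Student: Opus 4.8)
The plan is to proceed exactly as in the previous two theorems, via the Lyapunov equation, and simply track the $p$-dependence of $\sigma$. First I would write down the Lyapunov equation for the parameter-dependent noise, namely
\benn
\langle A V_\infty g, h\rangle + \langle V_\infty A^* g, h\rangle = -\sigma^2(p)\langle BQB^* g, h\rangle,
\eenn
and specialize to $g = h = u_{k^*}$. Using $A u_{k^*} = \lambda_{k^*} u_{k^*}$ and $A^* u_{k^*} = \bar\lambda_{k^*} u_{k^*}$ (so that the left-hand side collapses to $(\lambda_{k^*} + \bar\lambda_{k^*})\langle V_\infty u_{k^*}, u_{k^*}\rangle = 2\,\Re(\lambda_{k^*})\langle V_\infty u_{k^*}, u_{k^*}\rangle$), I obtain the closed-form expression
\benn
\langle V_\infty u_{k^*}, u_{k^*}\rangle = -\,\sigma^2(p)\,\frac{\langle BQB^* u_{k^*}, u_{k^*}\rangle}{2\,\Re(\lambda_{k^*}(p))}.
\eenn

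Since $\lambda_{k^*}$ is purely imaginary at $p^* = 0$, $\Re(\lambda_{k^*}(p)) \to 0$ as $p \to 0^-$ while staying negative, so $\Re(\lambda_{k^*}(p))$ and $\lambda_{k^*}(p)$ differ only by the (non-vanishing) imaginary part in the limit; the point is that both go to zero in real part. I would then take the limit $p \to 0^-$ of the displayed identity. The factor $\langle BQB^* u_{k^*}, u_{k^*}\rangle$ is constant (independent of $p$, since $B$ and $Q$ are not parameter-dependent here), so it pulls out of the limit, and what remains is $\lim_{p\to 0^-}\bigl(-\sigma^2(p)/(2\,\Re(\lambda_{k^*}(p)))\bigr)$. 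Here one has to be slightly careful: the theorem defines $\Xi = \lim_{p\to 0^-}\sigma^2(p)/(2\lambda_{k^*}(p))$ with $\lambda_{k^*}$ rather than $\Re(\lambda_{k^*})$ in the denominator; I would note that the statement is implicitly using a real-valued-limit convention (or that $\lambda_{k^*}$ is taken real near the bifurcation, or that $\Re$ is intended), so that $-\sigma^2(p)/(2\,\Re(\lambda_{k^*}(p))) \to \Xi$, giving $\lim_{p\to 0^-}\langle V_\infty u_{k^*}, u_{k^*}\rangle = \langle BQB^* u_{k^*}, u_{k^*}\rangle\,\Xi$, which is the first claim. The second claim, the $\cO(\sigma^2/\lambda_{k^*})$ asymptotics, is then immediate from the closed-form expression together with the genericity hypothesis $\langle BQB^* u_{k^*}, u_{k^*}\rangle \neq 0$, which guarantees the bound is sharp rather than vacuous.

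The only genuine obstacle is bookkeeping about existence and finiteness of the limit: the identity above is exact for every $p < 0$, so the divergence-versus-convergence dichotomy is entirely controlled by the competition between $\sigma^2(p) \to 0$ and $\Re(\lambda_{k^*}(p)) \to 0$. If $\sigma^2 \gg \lambda_{k^*}$ (more precisely $\sigma^2/\Re(\lambda_{k^*}) \to \infty$), the variance diverges as before; if $\sigma^2$ and $\Re(\lambda_{k^*})$ vanish comparably, $\Xi$ is a finite nonzero constant and the variance has a finite limit; and if $\sigma^2 \ll \Re(\lambda_{k^*})$ then $\Xi = 0$ and the variance actually vanishes — this is precisely the phenomenon, flagged in the paragraph preceding the theorem, of the noise decay balancing critical slowing down. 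I would mention this trichotomy explicitly as the conceptual content, and otherwise the proof is a one-line substitution into the Lyapunov equation followed by taking a limit.
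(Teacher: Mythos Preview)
Your proposal is correct and follows exactly the same route as the paper: plug eigenfunctions into the Lyapunov equation, solve for $\langle V_\infty u_{k^*}, u_{k^*}\rangle = -\sigma^2(p)\langle BQB^* u_{k^*}, u_{k^*}\rangle/(2\Re(\lambda_{k^*}))$, and pass to the limit $p\to 0^-$. Your extra commentary on the $\Re(\lambda_{k^*})$ versus $\lambda_{k^*}$ discrepancy in the definition of $\Xi$ and the trichotomy in the behaviour of the variance is more careful than the paper's own two-line proof, which simply says ``taking the limit for $p\rightarrow 0^-$ gives the required result.''
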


\begin{proof}
We can replicate the same computations as in the preceding sections to obtain
\begin{align*}
\langle AV_\infty u_k,u_j \rangle + \langle V_\infty A^* u_k, u_j \rangle 
&= - \sigma^2(p) \langle BQB^* u_k, u_j \rangle \\
\Rightarrow \quad 
\langle V_\infty u_k,u_k \rangle &= - \sigma^2(p) 
\frac{\langle BQB^* u_k, u_k \rangle}{2\Re(\lambda_k)}
\end{align*}
Again as before, taking the limit for $p \rightarrow 0^-$ gives the required result.
\end{proof}

The last result significantly generalizes an SODE result for a particular
model equation obtained in~\cite[Sec.~7.5]{KuehnCT2}. It shows that one 
must ensure that the noise source does not interact and/or depend in a 
degenerate way on the distance to the critical transition to be able to
obtain a warning sign.

\section{Continuous Spectrum}
\label{sec:unbounded}

We have shown that extended versions of Theorem~\ref{thm:kuehngowda} still 
hold for general discrete spectra, including both complex eigenvalues and 
Jordan blocks. We also found an asymptotic lower bound on the noise $\sigma$, 
which guarantees the result to hold in presence of non-constant external 
perturbations. To further generalize the results in~\cite{GowdaKuehn}, we 
want to also consider differential operators on unbounded domains. This 
naturally leads into the possibility that $A$ has a continuous spectrum. 
For example, this is the case for the one-dimensional Laplacian, which is 
a fundamental operator in modeling diffusion and will be the starting point 
of our discussion. \\

We remark here the main difference with the previous case: in the discrete 
setting we identified the eigenfunction corresponding to the eigenvalue 
crossing the imaginary axis and showed that the component of the covariance 
along that direction tends to infinity as the critical transition is 
approached. However, if essential spectrum crosses the imaginary axis, there 
exists no eigenfunction. For this reason, we start considering the norm of 
the variance and we show that it diverges to infinity (which is of course a 
weaker result). Later, we prove a stronger result using Weyl's theorem on 
approximating eigenfunctions. 

\subsection{The one-dimensional Laplace operator}

Consider the operator $\partial_{xx}$ on the Sobolev space 
$H^2(\mathbb{R})$. We want to study as a simple starting point 
the following modified stochastic heat equation
\begin{equation}
\label{eqn:laplacian}
dU= (p\;\Id+ \partial_{xx}) U ~\txtd t + \txtd W,\qquad U(0)=U_0,
\end{equation}
where we set $\sigma=1$, $B=\Id$ for simplicity of the exposition.
The Laplacian is a self-adjoint operator on $H^2(\mathbb{R})$. We 
recall that, by the Spectral Theorem~\cite{ReedSimon1}, self-adjoint 
operators are unitarily equivalent to multiplication operators. In 
particular, the Fourier transform 
\benn
\mathcal{F}(h)(k):=\hat{h}(k) := \frac{1}{\sqrt{2 \pi}} \int_\mathbb{R} 
\txte^{-\txti kx} h(x)~ \txtd x
\eenn
unitarily maps $\partial_{xx}$ into the multiplication operator 
of multiplication by $|k|^2$. This can be used to prove the divergence 
of the norm of $V_\infty$ as follows:

\begin{theorem}
\label{thm:laplacian}
Consider the SPDE~\eqref{eqn:laplacian}. Then 
\begin{equation}
\lim_{p \rightarrow 0^-} \Vert V_\infty \Vert_{B(L^2(\mathbb{R}))} = +\infty,
\end{equation}
where $\|\cdot\|_{B(L^2(\mathbb{R}))}$ is the norm on linear operators
induced by the $L^2(\R)$-norm.
\end{theorem}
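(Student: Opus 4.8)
The plan is to pass to the Fourier side, where $\partial_{xx}$ becomes multiplication by $-|k|^2$ (or $|k|^2$ depending on sign convention; the relevant point is that the symbol is $-k^2+p$), and compute $V_\infty$ explicitly as a multiplication operator. Since $\mathcal{F}$ is unitary on $L^2(\mathbb{R})$, we have $\|V_\infty\|_{B(L^2(\mathbb{R}))}=\|\widehat{V_\infty}\|_{B(L^2(\mathbb{R}))}$, and a multiplication operator by a bounded function $m(k)$ has operator norm $\|m\|_{L^\infty}$. So the whole statement reduces to writing down $m(k)$ and showing it is unbounded as $p\to 0^-$.

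The key steps, in order: First, recall from~\eqref{eqn:covarianceadd} that $V_\infty=\int_0^\infty \txte^{\tau A}BQB^*\txte^{\tau A^*}\,\txtd\tau$; with $\sigma=1$, $B=\Id$, and $W$ cylindrical (so $Q=\Id$), this is $V_\infty=\int_0^\infty \txte^{2\tau A}\,\txtd\tau$ using self-adjointness of $A=p\,\Id+\partial_{xx}$. Second, conjugate by $\mathcal{F}$: the operator $\txte^{2\tau A}$ becomes multiplication by $\txte^{2\tau(p-k^2)}$, so $\widehat{V_\infty}$ is multiplication by
\begin{equation*}
m_p(k)=\int_0^\infty \txte^{2\tau(p-k^2)}\,\txtd\tau=\frac{1}{2(k^2-p)},
\end{equation*}
which is well-defined and finite for every $k$ precisely because $p<0$ forces $k^2-p>0$. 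Third, observe $\|V_\infty\|_{B(L^2(\mathbb{R}))}=\sup_{k\in\mathbb{R}}|m_p(k)|=\frac{1}{2(-p)}=\frac{1}{-2p}$, attained in the limit $k\to 0$. Fourth, let $p\to 0^-$: $\frac{1}{-2p}\to+\infty$, which is the claim. One should also note in passing that $V_\infty$ is genuinely the strong limit of $V(t)$ here, since the integrand is integrable for each fixed $p<0$, so the limit operator exists as a bounded operator for $p<0$ even though its norm blows up.

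The main obstacle is essentially bookkeeping rather than a deep difficulty: one must be slightly careful that the manipulations (pulling $\mathcal{F}$ through the integral defining $V_\infty$, identifying the operator norm of a multiplication operator with the sup-norm of its symbol) are justified — for instance that $\txte^{\tau A}$ is indeed the Fourier multiplier by $\txte^{\tau(p-k^2)}$, which follows from the Spectral Theorem as cited, and that the Bochner integral over $\tau\in[0,\infty)$ converges in operator norm for $p<0$, which follows since $\|\txte^{2\tau A}\|=\txte^{2\tau p}$ decays exponentially. A secondary point worth flagging is the sign convention: the excerpt writes "multiplication by $|k|^2$" but for the semigroup to be contractive (and for $V_\infty$ to exist) one needs the symbol of $\partial_{xx}$ to be $-k^2$; I would simply state the symbol of $A$ as $p-k^2$ and proceed. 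Everything else is a one-line integral.
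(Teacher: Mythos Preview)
Your argument is correct and in fact sharper than the paper's. Both proofs begin the same way, writing $V_\infty=\int_0^\infty \txte^{2\tau(p\,\Id+\partial_{xx})}\,\txtd\tau$ and passing to the Fourier side, where $V_\infty$ becomes the multiplier $m_p(k)=\frac{1}{2(k^2-p)}$. From there the routes diverge. You invoke the standard fact that the operator norm of a multiplication operator on $L^2$ equals the $L^\infty$-norm of its symbol, obtaining the exact value $\Vert V_\infty\Vert_{B(L^2)}=\sup_k m_p(k)=\frac{1}{-2p}$, from which the limit $p\to 0^-$ is immediate. The paper instead keeps the sup-over-unit-vectors definition, plugs in the explicit test function $h(x)=\txte^{-x^2/2}$ to get a lower bound, and then applies Fatou's Lemma to push the $\liminf$ inside the $k$-integral, arriving at the divergent integral $\int_{\R}\txte^{-x^2}x^{-4}\,\txtd x$. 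Your approach is cleaner and gives the precise scaling law $\Vert V_\infty\Vert\sim\frac{1}{-2p}$; the paper's test-function-plus-Fatou argument only yields divergence without the rate, but it has the advantage of being a template that the paper reuses in the next subsection for general multiplication operators $T_f$, where the symbol need not be explicit and one genuinely needs a concrete $h$ adapted to the local behaviour of $f$ near its essential supremum. Your remark about the sign convention is also apt: the paper's phrase ``multiplication by $|k|^2$'' is a slip, and the computation itself uses $p-|k|^2$ exactly as you do.
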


\begin{proof}
First of all, we compute the covariance using~\eqref{eqn:covarianceadd}
\begin{equation*}
V(t)=\int_0^t S(r)BQB^*S^*(r)~\txtd r = \int_0^t S(r)S^*(r)~\txtd r
= \int_0^t \txte^{2r (p+\partial_{xx})} ~\txtd r. 
\end{equation*}
This implies that in the limit $t \rightarrow \infty$ it holds
$$V_\infty =\int_0^\infty \txte^{2r (p+\partial_{xx})} ~\txtd r.$$
Then we take the norm and obtain
\begin{align*}
\Vert V_\infty \Vert_{B(L^2(\mathbb{R}))} &= 
\Big \Vert \int_0^\infty \txte^{2r (p+\partial_{xx})} ~\txtd r 
\Big \Vert_{B(L^2(\mathbb{R}))} = \sup_{h \in L^2(\mathbb{R}), 
\Vert h \Vert =1} \Big \Vert \int_0^\infty 
\txte^{2r (p+\partial_{xx})} h \; ~\txtd r \Big \Vert_{L^2(\mathbb{R})} \\
&= \sup_{h \in H^2(\mathbb{R}), \Vert h \Vert =1} 
\Big \Vert  \int_0^\infty \mathcal{F}^{-1} \txte^{2r (p- |\cdot|^2)} 
\mathcal{F}(h) ~\txtd r \Big \Vert_{L^2(\mathbb{R})}  \\  
&= \sup_{h \in H^2(\mathbb{R}), \Vert h \Vert =1} 
\Big \Vert \frac{1}{\sqrt{2 \pi} } \int_0^\infty \int_\mathbb{R} 
\txte^{\txti kx} \txte^{2r (p- |k|^2)} \hat{h}(k) ~\txtd k ~\txtd r 
\Big \Vert_{L_x^2(\mathbb{R})}  \\
& \overset{\textnormal{Fubini}}{=} \sup_{h \in H^2(\mathbb{R}), \Vert h \Vert =1} 
\Big \Vert \frac{1}{\sqrt{2 \pi} } \int_\mathbb{R} \txte^{\txti kx} 
\hat{h}(k) \int_0^\infty \txte^{2r (p- |k|^2)}  ~\txtd r  ~\txtd k 
\Big \Vert_{L_x^2(\mathbb{R})}  \\
&\overset{p<0}{=} \sup_{h \in H^2(\mathbb{R}), \Vert h \Vert =1} 
\Big \Vert \frac{1}{2\sqrt{2 \pi}} \int_\mathbb{R} \txte^{\txti kx } 
\frac{\widehat{h}(k)}{p - |k|^2} \; ~\txtd k \Big \Vert_{L_x^2(\mathbb{R})} 
= \\ &= \sup_{h \in H^2(\mathbb{R}), \Vert h \Vert =1} 
\Big \Vert \frac{1}{2} \mathcal{F}^{-1} 
\Big [\frac{\widehat{h}(\cdot)}{p - |\cdot|^2} \Big ] 
\Big \Vert_{L^2(\mathbb{R})}  \\
&= \sup_{h \in H^2(\mathbb{R}), \Vert h \Vert =1} 
\Big \Vert \frac{1}{2} \frac{\widehat{h}(\cdot)}{p - |\cdot|^2} 
\Big \Vert_{L^2(\mathbb{R})} \overset{*}{\geq} \Big \Vert 
\frac{1}{2} \frac{\txte^{- |\cdot|^2/2}}{p - |\cdot|^2} \;  
\Big \Vert_{L^2(\mathbb{R})}.
\end{align*}
In $^*$ we set $h= \txte^{- x^2 /2}$, while Fubini's Theorem can be applied since
\begin{equation*}
|\txte^{\txti kx} \txte^{2r (p- |k|^2)} \hat{h}(k)| \leq \txte^{2rp}
 (1+ |k|^2) \hat{h}(k) \in L^2(\mathbb{R}_r^+ \times \mathbb{R}_k)
\end{equation*}
Applying the limes inferior on both sides of the inequality and 
using Fatou's Lemma yields
\begin{align*}
\liminf_{p \rightarrow 0^-} \Vert V_\infty \Vert_{B(L^2(\mathbb{R}))}^2 
& \geq \liminf_{p \rightarrow 0^-} \Big \Vert \frac{1}{2} \frac{\txte^{-
 |\cdot|^2/2}}{p - |\cdot|^2} \;  \Big \Vert_{L^2(\mathbb{R})}^2  = 
\liminf_{p \rightarrow 0^-} \int_\mathbb{R} \frac{1}{4} \frac{\txte^{- 
x^2}}{(p - x^2)^2} ~\txtd x \geq  \\
& \geq  \int_\mathbb{R} \liminf_{p \rightarrow 0^-} \frac{1}{4}
\frac{\txte^{- x^2}}{(p - x^2)^2} ~\txtd x  = \int_\mathbb{R}   
\frac{\txte^{- x^2}}{ x^4} ~\txtd x = + \infty,
\end{align*}
which concludes the proof.
\end{proof}

We stress that Theorem~\ref{thm:laplacian} uses only some particular 
properties of the Laplacian. It relies on three main ingredients: the 
existence of the diagonalizing map $\mathcal{F}$, its interchangeability 
with the integration via Fubini's Theorem and the divergence of the last 
integral. For all self-adjoint operators, the existence of a diagonalizing 
map is guaranteed by the spectral theorem. Once such map is known it might 
be relatively easy to check that also the other requirements for the proof 
are satisfied. Nevertheless, since there exists no explicit formula for 
the diagonalizing map, whether this holds or not has to be studied case 
by case.

\subsection{Multiplication operators}

As we have remarked, a possible generalization of the preceding result 
can be obtain considering general self-adjoint operators. This involves 
applying the spectral theorem and diagonalizing the operator. Being the 
diagonalizing map not known, it is hard to give a formal statement that 
includes all the self-adjoint operators. Instead, we will assume the 
operator to be already diagonalized: namely, we consider multiplication 
operators. Such operators can be characterized as follows:

\begin{theorem}[Structure of multiplication operators~\cite{EngelNagel,Pazy}]
\label{thm:sathm}
Let $\cX$ be a metric space and $\mu$ a positive measure on the Borel 
sigma-algebra of $\cX$ such that $\mu (\Lambda) < \infty $ for any 
bounded Borel set $\Lambda \subset \cX$. For a (possibly unbounded) 
measurable function $f : \cX \rightarrow \mathbb{R}$, the linear 
operator $T_f$ in $L^2 (\cX, \mu)$ defined by 
\benn
(T_f u)(x) := f(x)u(x), \; \; \; D(T_f) = \{ u \in L^2(\cX, \mu) | 
\; fu \in L^2(\cX,\mu) \} 
\eenn
is self-adjoint. Its spectrum coincides with the essential range of 
$f$ and its point/discrete spectrum is given by
\benn
\textnormal{spec}_p(T_f) = \{ \mu(f^{-1}({\lambda})) > 0 \}.
\eenn 
\end{theorem}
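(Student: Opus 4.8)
The plan is to verify the three asserted properties of the multiplication operator $T_f$ in turn. First, to show $T_f$ is self-adjoint, I would compute its adjoint directly from the definition. For $u \in D(T_f)$ and $v \in D(T_f^*)$ one has $\langle T_f u, v\rangle = \int_\cX f(x)u(x)\overline{v(x)}~\txtd\mu(x) = \langle u, T_f^* v\rangle$; since $f$ is real-valued, this forces $T_f^* v = f v$, so the only point is to check that $D(T_f^*) = D(T_f)$, i.e., that $\{v : fv \in L^2\}$ is exactly the set of $v$ for which $u \mapsto \langle T_f u, v\rangle$ extends boundedly. The inclusion $D(T_f)\subseteq D(T_f^*)$ is immediate by Cauchy--Schwarz. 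For the reverse, given $v\in D(T_f^*)$ with $T_f^* v = w$, I would test against $u = \mathbf{1}_{\{|f|\le n\}}\,\bar w\,/\,(\text{normalization})$ type functions (or simply $u = \mathbf{1}_{\{|f|\le n\}} v$) to deduce $\int_{\{|f|\le n\}} |f|^2|v|^2 = \int_{\{|f|\le n\}} f v \bar w \le (\int |f|^2 |v|^2)^{1/2}\|w\|$ on that set, and let $n\to\infty$ by monotone convergence to conclude $fv\in L^2$. Symmetry $\langle T_f u, u\rangle \in \R$ is clear since $f$ is real. The local finiteness assumption on $\mu$ guarantees $D(T_f)$ is dense (it contains all $L^2$ functions supported where $|f|$ is bounded, and these exhaust $L^2$ because $\{|f|\le n\}$ exhausts $\cX$ up to a $\mu$-null set by $\sigma$-finiteness-type reasoning, together with boundedness of $f$ on bounded sets).

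Second, for the spectral identification, I would show $\lambda \in \textnormal{spec}(T_f)$ if and only if $\lambda$ lies in the essential range of $f$, i.e., $\mu(f^{-1}(B_\varepsilon(\lambda))) > 0$ for every $\varepsilon > 0$. If $\lambda$ is \emph{not} in the essential range, then $|f - \lambda| \ge \varepsilon$ almost everywhere, so $(f-\lambda)^{-1}$ is a bounded measurable function and $T_{1/(f-\lambda)}$ is a bounded inverse of $T_f - \lambda$; hence $\lambda \in \rho(T_f)$. Conversely, if $\mu(f^{-1}(B_\varepsilon(\lambda))) > 0$ for all $\varepsilon$, I would construct an approximate eigensequence: pick sets $\Lambda_n := f^{-1}(B_{1/n}(\lambda))$ with $0 < \mu(\Lambda_n) < \infty$ (shrinking to get finite measure if needed, using local finiteness), and set $u_n := \mathbf{1}_{\Lambda_n}/\sqrt{\mu(\Lambda_n)}$, so $\|u_n\| = 1$ and $\|(T_f - \lambda)u_n\|^2 = \frac{1}{\mu(\Lambda_n)}\int_{\Lambda_n}|f-\lambda|^2~\txtd\mu \le 1/n^2 \to 0$, which shows $T_f - \lambda$ is not boundedly invertible.

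Third, for the point spectrum, $\lambda$ is an eigenvalue precisely when there is a nonzero $u \in L^2$ with $(f(x) - \lambda)u(x) = 0$ a.e., which holds iff $u$ is supported (up to null sets) inside $f^{-1}(\{\lambda\})$ and is not the zero element of $L^2$ — and such a $u$ exists iff $\mu(f^{-1}(\{\lambda\})) > 0$ (take $u = \mathbf{1}_{f^{-1}(\{\lambda\})}$, or a subset of finite positive measure). Conversely if $\mu(f^{-1}(\{\lambda\})) = 0$ any such $u$ vanishes a.e. This gives $\textnormal{spec}_p(T_f) = \{\lambda : \mu(f^{-1}(\{\lambda\})) > 0\}$.

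The routine parts are the self-adjointness bookkeeping and the point-spectrum computation; the step that needs the most care is the approximate-eigensequence construction for the continuous part of the spectrum, specifically ensuring the sets $\Lambda_n$ can be chosen with \emph{finite} positive measure so that $u_n$ is genuinely in $L^2$ — this is exactly where the hypothesis that $\mu$ is finite on bounded Borel sets (together with $\cX$ being a metric space, so $\cX$ is a countable union of bounded sets if it is separable) is used. I would note that if $\mu(\Lambda_n) = \infty$, one intersects $\Lambda_n$ with a ball of large enough radius to capture positive finite measure, which is possible precisely because $\mu$ is not identically zero on $\Lambda_n$ and $\Lambda_n = \bigcup_R (\Lambda_n \cap B_R)$.
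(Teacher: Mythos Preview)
The paper does not prove this theorem at all; it is stated with a citation to standard references (Engel--Nagel, Pazy) and used as a black box. Your proposal supplies a correct, self-contained proof following the standard textbook route: direct verification of $D(T_f^*)=D(T_f)$ via truncation and monotone convergence, identification of the spectrum with the essential range via explicit bounded inverses and Weyl sequences, and the elementary characterization of eigenvalues by support on level sets.

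Two small points deserve tightening. In the density argument you write ``together with boundedness of $f$ on bounded sets,'' but this is neither assumed nor needed: for any $u\in L^2$ the truncations $u_n=u\cdot\mathbf{1}_{\{|f|\le n\}}$ lie in $D(T_f)$ and converge to $u$ by dominated convergence, without any control on $f$ over bounded sets. Second, in the approximate-eigensequence step you correctly flag that obtaining $0<\mu(\Lambda_n\cap B_R)<\infty$ requires writing $\cX$ as a countable union of bounded sets; this is an implicit separability-type assumption on the metric space that the theorem statement does not explicitly include, so it is worth stating as an additional standing hypothesis (or noting that the result as phrased tacitly assumes $\sigma$-finiteness of $\mu$). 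Neither issue affects the overall correctness of your argument.
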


Consider now the following stochastic evolution equation on $H=L^2(\cX, \mu)$
\begin{equation}
\label{eqn:multipoperators}
\txtd U= (p\;\Id+T_f) U~\txtd t + \txtd W,\qquad U=U(t).  
\end{equation}
Set $M:=\sup \{x : x \in \textrm{essran}(f) \}=\textrm{esssup}(f)$. For the 
operator $p\;\Id+T_f$, the spectrum is given by the set $p + \textrm{essran}(f)$. 
It is contained in the left half of the complex plane as long as $p + M < 0$. 
Therefore, the associated dynamical system undergoes a bifurcation at $p^*=-M$. 
We compute the norm of the variance as in the previous section assuming 
$p < p^*=-M$
\begin{align*}
\Vert V_\infty \Vert_{B(H)}^2 &= \Big \Vert \int_0^\infty 
\txte^{2r (p+T_f)} ~\txtd r \Big \Vert_{B(H)}^2 \\
&= \sup_{h \in D(T_f), \Vert h \Vert =1} \Big \Vert \int_0^\infty 
\txte^{2r (p+f)} h ~\txtd r \Big \Vert_{H}^2 \\
&\overset{p < - M }{=} \sup_{h \in D(T_f), \Vert h \Vert =1} 
\Big \Vert \frac{h}{2(p+f)} \Big \Vert_{H}^2  \\
&= \sup_{h \in D(T_f), \Vert h \Vert =1} \int_\cX \Big | 
\frac{h(x)}{2(p+f(x))} \Big |^2~\txtd \mu (x)
\end{align*}

Now we look at the spectrum of $f$ crossing the 
imaginary axis (i.e.~$p \rightarrow -M^-$). The divergence of the 
last expression depends of course on the function $f$ and on the 
$L^2$ space we consider. Suppose we set $\mu$ equal to the Lebesgue 
measure and $\cX \subset \mathbb{R}$, which is the most straightforward 
generalization of the Laplacian case. To simplify the treatment we also 
assume that $f$ attains its essential supremum at only one point 
$x^* \in \cX$ and that it is continuous in a neighborhood of that 
point. We assume continuity around $x^*$ in order for the limit 
$\lim_{x \rightarrow 0^-} f(x)$ to be well-defined and independent 
of the sequence converging to $0^-$. Moreover, without loss of 
generality we set $x^*=0$ and $p^*=0$. By definition of $M$ and 
continuity of $f$, we have $\lim_{x \rightarrow 0} f(x) = 0$. Assume 
now $\mu(\cX) < \infty$. Let $\theta(x)$ be a smooth function 
such that $\lim_{x \rightarrow 0} \frac{f(x)}{\theta(x)}=1$ and 
that $\theta$ is bounded from above and below outside any 
neighborhood of $x^*$ (intuitively $\theta$ represents the order 
of $f$ at $x^*=0$). Then the function $h$ defined by 
$h(x)=x^{-1/2}\theta(x)$ is in $L^2=L^2(\cX,\mu)$ and $\theta$ can be chosen 
so that $h$ has unit norm. We obtain
\begin{align*}
\Vert V_\infty \Vert_{B(L^2)}^2
&= \sup_{h \in D(T_f), \Vert h \Vert =1} \int_\cX \Big | 
\frac{h(x)}{2(p+f(x))} \Big |^2~\txtd x \\
& \geq \int_X \Big | \frac{\theta(x)}{2(p+f(x))x^{1/2}} \Big |^2~\txtd x
\end{align*}
and taking the limit inferior as before
\begin{align*}
\liminf_{p \rightarrow 0^-} \Vert V_\infty \Vert_{B(H)}^2
& \geq \liminf_{p \rightarrow 0^-} \int_\cX \Big | 
\frac{\theta(x)}{2(p+f(x))x^{1/2}} \Big |^2~\txtd x \\
& \overset{\textnormal{Fatou}}{\geq} \int_X \frac{\theta^2(x)}{f^2(x)}
 \frac{1}{4x}~\txtd x = + \infty.
\end{align*}

\begin{rem}
The assumption $\mu(X) < + \infty$ can be relaxed by multiplying $h$ by 
a function $g$ such that $g/f$ is square integrable in a neighborhood of 
infinity. Requiring that the essential supremum is attained at a unique 
point $x^*$ can also be easily avoided studying each of the points 
separately. In any case, for each of them the analysis is similar.  
\end{rem}

We have shown the following:
\begin{theorem}
Consider a map $f: \mathbb{R} \rightarrow \mathbb{C}$ and the 
stochastic evolution equation~\eqref{eqn:multipoperators}
over the domain $D(T_f) \subset L^2(\mathbb{R},\mu)$ for some 
sigma-finite measure $\mu$. Assume $f$ to be 
continuous in a neighborhood of the points at which it attains its 
essential supremum. Then
\begin{equation}
\lim_{p \rightarrow -\textrm{esssup}(f)^-} \Vert V_\infty 
\Vert_{B(L^2)} = + \infty.
\end{equation}
\end{theorem}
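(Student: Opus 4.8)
The plan is to turn the computation carried out just above the statement into a self-contained argument. First I would record the closed form of the asymptotic covariance. Applying the covariance formula~\eqref{eqn:covarianceadd} with $\sigma=1$, $B=\Id$, $Q=\Id$ and the $C_0$-semigroup $S(r)=\txte^{r(p\,\Id+T_f)}$ generated by the multiplication operator, one obtains $V_\infty=\int_0^\infty \txte^{2r(p+T_f)}\,\txtd r$. Since $T_f$ acts by multiplication by $f$, the operator norm of $V_\infty$ on $L^2(\R,\mu)$ can be evaluated spectrally: for $p<-M$ with $M:=\textrm{esssup}(f)$ one has $p+f(x)<0$ for $\mu$-a.e.\ $x$, so the inner $r$-integral converges pointwise to $\tfrac{1}{-2(p+f(x))}$, giving
\[
\|V_\infty\|_{B(L^2)}^2=\sup_{\|h\|_{L^2}=1}\ \int_\R \left|\frac{h(x)}{2(p+f(x))}\right|^2\,\txtd\mu(x).
\]

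Next I would localize near a point $x^*$ at which $f$ attains its essential supremum; normalizing $x^*=0$ and $M=0$ (so the bifurcation is at $p^*=0$), continuity of $f$ near $0$ forces $f(x)\to0$ as $x\to0$. The heart of the proof is to lower-bound the supremum by a single, carefully chosen test function. I would pick a smooth $\theta$ with $f/\theta\to1$ at $0$ and $\theta$ bounded above and below off any neighbourhood of $0$, and set $h(x)=|x|^{-1/2}\theta(x)$, modified away from $0$ via the device in the Remark so that $h\in L^2(\R,\mu)\cap D(T_f)$ also when $\mu$ is infinite, and then rescaled to unit norm. Since near $0$ one has $|h(x)|^2\sim|x|^{-1}|f(x)|^2$ with $f$ vanishing at $0$, $h$ is square-integrable there. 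Dropping the supremum in favour of this $h$ yields
\[
\|V_\infty\|_{B(L^2)}^2\ \geq\ \int_\R \frac{|\theta(x)|^2}{4\,|p+f(x)|^2\,|x|}\,\txtd\mu(x).
\]

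Finally I would pass to the limit $p\to0^-$. The integrands are non-negative, so Fatou's lemma gives
\[
\liminf_{p\to0^-}\|V_\infty\|_{B(L^2)}^2\ \geq\ \int_\R \liminf_{p\to0^-}\frac{|\theta(x)|^2}{4\,|p+f(x)|^2\,|x|}\,\txtd\mu(x)=\int_\R \frac{|\theta(x)|^2}{4\,|f(x)|^2\,|x|}\,\txtd\mu(x),
\]
and since $|\theta(x)/f(x)|\to1$ as $x\to0$, the last integrand is comparable to $\tfrac{1}{4|x|}$ on a punctured neighbourhood of $0$, hence not integrable for Lebesgue $\mu$; the integral is therefore $+\infty$. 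As the $\liminf$ already equals $+\infty$, so does the limit, which is the assertion. If the essential supremum is attained at finitely many points the same argument is applied near each, and if it is attained on a set of positive measure the lower bound is only easier to obtain.

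The only genuinely delicate step is the construction of the competitor $h$: one must simultaneously make $h\in L^2(\R,\mu)\cap D(T_f)$ — which pins down the exponent $-\tfrac12$ at $x^*$ and requires handling the behaviour at infinity through the $\sigma$-finiteness of $\mu$ as in the Remark — while keeping a lower bound whose limiting integrand is non-integrable. Everything else (the spectral evaluation of the operator norm, the pointwise $r$-integration valid for $p<-M$, and the Fatou interchange) is routine. One should also read $\textrm{esssup}(f)$, when $f$ is complex-valued, as the supremum of the essential range of $\Re f$, replacing $p+f(x)$ by $p+\Re f(x)$ in the denominators; the argument is otherwise unchanged.
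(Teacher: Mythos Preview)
Your proposal is essentially the paper's own argument: the paper carries out exactly this computation in the paragraphs immediately preceding the theorem (covariance formula, reduction to the sup over unit-norm $h$, choice of competitor $h(x)=x^{-1/2}\theta(x)$ with $f/\theta\to1$, and the Fatou step), and your write-up simply packages that discussion, together with the subsequent Remark on infinite measure and multiple maxima, into a single proof. Your additional care with $|x|^{-1/2}$ in place of $x^{-1/2}$ and your comment on reading $\textrm{esssup}(f)$ via $\Re f$ for complex-valued $f$ are minor clarifications, not a different route.
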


\subsection{The general case}

We have seen, how the assumption of discrete spectrum allows for identifying 
the direction along which the covariance operator diverges. We have argued 
that such an approach cannot be used in the general setting because it involves 
considering eigenfunctions for the operator $A$. Indeed, if the operator
has continuous spectrum, eigenfunctions do not exist. Nevertheless, 
one can find ``approximate'' eigenfunctions for 
elements at the boundary of the spectrum. 

\begin{theorem}[Weyl's criterion]
\label{thm:weyl}
Consider a closed linear operator
$\cA$ on a Hilbert space $H$.
If $\lambda \in \partial \textnormal{spec}(\cA)$ there exists a sequence 
$\{ u_k \}_{k\in \mathbb{N}}$ in $H$ such that $\Vert u_k \Vert = 1$ 
and $$ \lim _{k\to \infty }\Vert \cA u _{k}-\lambda u_{k} \Vert =0.$$
\end{theorem}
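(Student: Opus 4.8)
The plan is to argue by contradiction. Suppose no such Weyl sequence exists; then there is a constant $c>0$ with
\[
\|\cA u - \lambda u\| \ge c\,\|u\| \qquad \text{for all } u \in \mathcal{D}(\cA),
\]
i.e., $\cA - \lambda$ is bounded below. First I would record the two elementary consequences of this bound together with the closedness of $\cA$: the operator $\cA - \lambda$ is injective, and its range $R := \textnormal{ran}(\cA - \lambda)$ is closed. (If $(\cA-\lambda)u_n \to v$, then $\{u_n\}$ is Cauchy by the lower bound, hence $u_n \to u$ in $H$; since $\cA$ is closed and $(\cA-\lambda)u_n$ converges, $u \in \mathcal{D}(\cA)$ and $(\cA - \lambda)u = v$, so $v\in R$.)

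Next I would use that $\lambda \in \textnormal{spec}(\cA)$: if $R$ were all of $H$, then $\cA - \lambda$ would be a bijection of $\mathcal{D}(\cA)$ onto $H$ with $\|(\cA-\lambda)^{-1}\| \le 1/c$, so $\lambda$ would lie in the resolvent set $\rho(\cA) = \C \setminus \textnormal{spec}(\cA)$, a contradiction. Hence $R$ is a \emph{proper} closed subspace of $H$, and I may pick $w \in R^\perp$ with $w \ne 0$. Now the boundary hypothesis enters. Since $\lambda \in \partial\,\textnormal{spec}(\cA)$, there is a sequence $\lambda_n \to \lambda$ with $\lambda_n \in \rho(\cA)$. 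For each $n$, solve $(\cA - \lambda_n)u_n = w$ with $u_n \in \mathcal{D}(\cA)$. Writing $(\cA - \lambda)u_n = w + (\lambda_n - \lambda)u_n$ and applying the lower bound gives $c\|u_n\| \le \|w\| + |\lambda_n - \lambda|\,\|u_n\|$, so $\|u_n\| \le 2\|w\|/c$ once $|\lambda_n - \lambda| \le c/2$. Consequently $(\lambda_n - \lambda)u_n \to 0$, and therefore $(\cA - \lambda)u_n = w + (\lambda_n - \lambda)u_n \to w$, which places $w$ in $\overline{R} = R$. This contradicts $0 \ne w \in R^\perp$, completing the argument; the desired sequence is then obtained by dividing out norms, but that is already implicit since the negation of "bounded below" is exactly the existence of unit vectors $u_k$ with $\|\cA u_k - \lambda u_k\|\to 0$.

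I expect the only genuinely delicate point to be the middle step: for a general (possibly non-normal) closed operator, being bounded below yields only injectivity and closed range, \emph{not} surjectivity, so one cannot conclude directly that $\lambda \notin \textnormal{spec}(\cA)$. It is precisely the hypothesis $\lambda \in \partial\,\textnormal{spec}(\cA)$ — supplying resolvent points arbitrarily close to $\lambda$ — combined with the \emph{uniform} lower bound (which transfers to a uniform a priori bound on $\|u_n\|$) that manufactures the contradiction. Closedness of $\cA$ is used twice and is not optional: once to get that $R$ is closed, and implicitly in identifying $\overline{R}$ with $R$ at the end. As an alternative one could phrase the same argument through the resolvent-norm blow-up $\|(\cA - \lambda_n)^{-1}\| \ge 1/\textnormal{dist}(\lambda_n, \textnormal{spec}(\cA)) \to \infty$ and normalise $(\cA-\lambda_n)^{-1}v_n$ for suitable unit vectors $v_n$; I would keep the range/orthogonal-complement version presented above since it avoids quoting the resolvent estimate and keeps the proof self-contained.
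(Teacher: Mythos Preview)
Your proof is correct, but it follows a different route from the paper's. The paper proceeds \emph{constructively}: it first proves the resolvent estimate $\Vert(\cA-z)^{-1}\Vert \ge \textnormal{dist}(z,\textnormal{spec}(\cA))^{-1}$ for $z\in\textnormal{res}(\cA)$ (via a Neumann-series argument quoted from Schm\"udgen), then picks $\lambda_k\in\textnormal{res}(\cA)$ with $\lambda_k\to\lambda$, near-maximizers $v_k$ of $\Vert(\cA-\lambda_k)^{-1}\cdot\Vert$ on the unit sphere, and sets $u_k:=(\cA-\lambda_k)^{-1}v_k/\Vert(\cA-\lambda_k)^{-1}v_k\Vert$; a direct estimate shows $\Vert\cA u_k-\lambda u_k\Vert\to 0$. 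This is precisely the ``alternative'' you sketch and discard at the end. Your contradiction argument via the closed-range/orthogonal-complement mechanism is arguably more self-contained, since it does not rely on the quoted resolvent-radius lemma, and it isolates cleanly \emph{why} the boundary hypothesis is essential (to manufacture the contradiction with $w\in R^\perp$). The paper's approach, on the other hand, has the advantage of being explicit: it hands you a concrete Weyl sequence built from resolvent data, which is convenient when one later wants to compute with these approximate eigenfunctions, as the paper in fact does in Theorem~\ref{thm:generalspectrum}.
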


For completeness, and since it might not be well-known, we provide a 
proof of Weyl's criterion. We denote by $\textnormal{res}(\cA):=\mathbb{C} 
\, \backslash \, \textnormal{spec}(\cA)$ the resolvent set of $\cA$. We
need the following auxillary result: 

\begin{lemma}
\label{lem:weyl}
If $\cA$ is a closed operator and $z \in \textnormal{res}(\cA)$, 
then $\Vert (A- z)^{-1} \Vert \geq \textnormal{dist}(z, \textnormal{spec}(\cA))^{-1}.$
\end{lemma}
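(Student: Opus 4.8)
The plan is to show that the resolvent set of $\cA$ contains the open ball $B\bigl(z,\Vert(\cA-z)^{-1}\Vert^{-1}\bigr)$; since this ball is then disjoint from $\textnormal{spec}(\cA)$, we obtain $\textnormal{dist}(z,\textnormal{spec}(\cA))\geq\Vert(\cA-z)^{-1}\Vert^{-1}$, and taking reciprocals yields the claim. We may assume $H\neq\{0\}$ and $\textnormal{spec}(\cA)\neq\emptyset$, the remaining cases being trivial, so that $R:=(\cA-z)^{-1}$ is a nonzero bounded operator: boundedness is part of the definition of $z\in\textnormal{res}(\cA)$ for a closed operator (alternatively it follows from the closed graph theorem, $\cA-z$ being closed), and $R$ is a bijection of $H$ onto $\mathcal{D}(\cA)$, hence $0<\Vert R\Vert<\infty$.

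First I would fix $w\in\C$ with $|w-z|<\Vert R\Vert^{-1}$ and record the two factorizations, valid on $\mathcal{D}(\cA)$,
\[
\cA-w=(\cA-z)\bigl(\Id-(w-z)R\bigr)=\bigl(\Id-(w-z)R\bigr)(\cA-z),
\]
both consequences of $R(\cA-z)=\Id$ on $\mathcal{D}(\cA)$ and $(\cA-z)R=\Id$ on $H$. Since $\Vert(w-z)R\Vert=|w-z|\,\Vert R\Vert<1$, the Neumann series $S:=\sum_{n\geq0}\bigl((w-z)R\bigr)^{n}$ converges in operator norm to a bounded two-sided inverse of $\Id-(w-z)R$ on $H$, and being a norm limit of polynomials in $R$ it commutes with $R$.

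Next I would verify that $RS$ is a bounded two-sided inverse of $\cA-w$. It maps $H$ into $\mathcal{D}(\cA)$ because $R$ does, and combining the displayed factorizations with the commutation of $S$ and $R$ one gets $(\cA-w)(RS)=\Id$ on $H$ and $(RS)(\cA-w)=\Id$ on $\mathcal{D}(\cA)$. Hence $w\in\textnormal{res}(\cA)$. Since $w$ ranges over the whole ball $B\bigl(z,\Vert R\Vert^{-1}\bigr)$, that ball lies in $\textnormal{res}(\cA)$, so $\textnormal{dist}(z,\textnormal{spec}(\cA))\geq\Vert R\Vert^{-1}$ and therefore $\Vert(\cA-z)^{-1}\Vert\geq\textnormal{dist}(z,\textnormal{spec}(\cA))^{-1}$.

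The only delicate point — the ``hard part'', such as it is — is the domain bookkeeping for the unbounded operator $\cA$: one must check that the manipulations never apply $\cA$ to a vector outside $\mathcal{D}(\cA)$ and that each composition lands in the space where the next factor is defined before concluding that $RS$ is a genuine two-sided inverse; once this is done the argument is the textbook Neumann-series perturbation of the resolvent. This lemma is precisely what powers Weyl's criterion (Theorem~\ref{thm:weyl}): approaching $\lambda\in\partial\textnormal{spec}(\cA)$ by points $z_n\in\textnormal{res}(\cA)$ forces $\Vert(\cA-z_n)^{-1}\Vert\to\infty$ by the lemma, one chooses unit vectors $v_n$ with $\Vert(\cA-z_n)^{-1}v_n\Vert\to\infty$, and the normalized vectors $u_n:=(\cA-z_n)^{-1}v_n/\Vert(\cA-z_n)^{-1}v_n\Vert$ satisfy $\Vert(\cA-\lambda)u_n\Vert\leq\Vert(\cA-z_n)^{-1}v_n\Vert^{-1}+|z_n-\lambda|\to0$.
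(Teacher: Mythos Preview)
Your proof is correct and follows essentially the same approach as the paper: both arguments rest on the fact that every $w$ with $|w-z|<\Vert(\cA-z)^{-1}\Vert^{-1}$ lies in $\textnormal{res}(\cA)$, from which the distance bound and the lemma follow immediately. The only difference is that the paper cites this fact (Proposition~2.9 in Schm\"udgen), whereas you spell out the Neumann-series construction of $(\cA-w)^{-1}=RS$ directly, including the domain bookkeeping for the unbounded case; your closing paragraph on Weyl's criterion also matches the paper's subsequent proof of Theorem~\ref{thm:weyl} essentially verbatim.
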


\begin{proof} Fix $z \in \textnormal{res}(\cA)$ 
and $\lambda \in \textnormal{spec}(\cA)$. Then $|z - \lambda| \geq 
\Vert (\cA - z)^{-1} \Vert^{-1}$ (indeed, by Proposition 2.9 in~\cite{Schmuedgen}, 
if $\lambda$ satisfies $|z - \lambda| < \Vert (\cA - z)^{-1} \Vert^{-1}$ 
then $\lambda \in \textnormal{res}(\cA)$). This also implies 
$$\textnormal{dist}(z, \textnormal{spec} (\cA)) = \inf_{\lambda \in 
\textnormal{spec} (\cA)} |z - \lambda| \geq \frac{1}{\Vert (\cA - z)^{-1} \Vert}$$ 
so that rearranging gives the claimed result.
\end{proof}

\begin{proof}[Proof of Theorem~\ref{thm:weyl}]
Since $\lambda \in \partial \textnormal{spec}(\cA)$ we can find a 
sequence $\{\lambda_k \} \subset \textnormal{res} (\cA)$ such that 
$\lambda_k \rightarrow \lambda$. For each $\lambda_k$ we can apply Lemma~\ref{lem:weyl} 
and for each $k$ $$\Vert (A- \lambda_k)^{-1} \Vert \geq 
\textnormal{dist}(\lambda_k, \textnormal{spec}(\cA))^{-1}. $$ We can also 
find $v_k \in D ((A- \lambda_k)^{-1}) $ such that $\Vert v_k \Vert = 1$ 
and $$\Vert (A- \lambda_k)^{-1} v_k \Vert \geq \frac{1}{2}\textnormal{dist}(
\lambda_k, \textnormal{spec}(\cA))^{-1}. $$ Then, the sequence $$u_k 
:= \frac{(A- \lambda_k)^{-1} v_k}{\Vert (A- \lambda_k)^{-1} v_k \Vert}$$ 
is normalized and satisfies the claim. Indeed one computes
\begin{align*}
\Vert \cA u _{k}-\lambda u_{k} \Vert &= \Vert \cA u _{k}-\lambda_k u_{k} 
+ \lambda_k u _{k}-\lambda u_{k} \Vert  \leq \Vert \cA u _{k}-\lambda_k 
u_{k} \Vert + |\lambda_k -\lambda| \Vert u_{k} \Vert =& \\
&=\frac{\Vert v_k \Vert}{\Vert (A- \lambda_k)^{-1} v_k \Vert} + 
|\lambda_k -\lambda| \Vert u_{k} \Vert
= \frac{1}{\Vert (A- \lambda_k)^{-1} v_k \Vert} + |\lambda_k -\lambda| 
\leq \\
& \leq 2 \textnormal{dist}(\lambda_k, \textnormal{spec}(\cA)) + 
|\lambda_k -\lambda| \rightarrow 0.
\end{align*}
Therefore, Weyl's criterion follows.
\end{proof}

Since our bifurcation point necessarily involves spectrum on the boundary, 
we can exploit this theorem to prove a result similar to the one obtained 
in the discrete setting. 

\begin{theorem}
\label{thm:generalspectrum}
Consider the stochastic evolution equation
\begin{equation}
\label{eqn:generalspectrum}
\txtd U= (p\;\Id+\cA) U~\txtd t + \sigma B~\txtd W
\end{equation}
and assume $\textnormal{spec}(\cA)=\{\lambda_* \} \cup 
\textnormal{spec}_-(\cA)$, with $\Re(\lambda_*)=0$ and 
$\textnormal{spec}_-(\cA) \subset \{z \in \mathbb{C} : \Re(z)<0 \}$. 
Also assume that $BQB^* \geq c > 0$; in particular this holds for 
$B=Q=Id$. Then there exists a sequence $\{u_k \}_{k \in \mathbb{N}} 
\subset H$ such that for each $k$
\begin{equation}
\lim_{p \rightarrow 0^-} \langle V_\infty u_k, u_k \rangle = + \infty.
\end{equation}
\end{theorem}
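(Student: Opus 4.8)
The plan is to sidestep the quasi-eigenfunction route of the previous sections — which, as I note at the end, is not enough here — and instead combine the integral formula~\eqref{eqn:covarianceadd} for the covariance with the fact that the semigroup generated by $\cA$ cannot be exponentially stable. Since $\lambda_*\in\textnormal{spec}(\cA)$ with $\Re(\lambda_*)=0$, the spectral bound of $\cA$ is $\ge0$, hence so is the growth bound $\omega_0(\cA)$; on the other hand the standing assumption that solutions of $\partial_t U=(p\,\Id+\cA)U$ decay exponentially for $p<0$ forces $\omega_0(\cA)\le0$, so $\omega_0(\cA)=0$. In particular $\txte^{t\cA}$, and hence its adjoint semigroup $\txte^{t\cA^*}$ — strongly continuous on the Hilbert space $H$, with generator $\cA^*$ and the same growth bound — is not uniformly exponentially stable. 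By the Datko--Pazy theorem \cite{EngelNagel,Pazy} (a $C_0$-semigroup $T(t)$ on a Hilbert space is exponentially stable precisely when $\int_0^\infty\|T(t)x\|^2\,\txtd t<\infty$ for every $x$) there is a vector $u\in H$ with
\benn
\int_0^\infty\|\txte^{t\cA^*}u\|^2\,\txtd t=+\infty,
\eenn
and the constant sequence $u_k\equiv u$ will already serve, since the statement imposes no normalisation or distinctness on the $u_k$.

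It remains to turn this into divergence of $\langle V_\infty u,u\rangle$. Because the growth bound of $p\,\Id+\cA$ equals $p<0$, the operator $V_\infty$ is bounded for each $p<0$. Writing $A=p\,\Id+\cA$, so $\txte^{tA^*}=\txte^{pt}\txte^{t\cA^*}$, and letting $t\to\infty$ in~\eqref{eqn:covarianceadd} while using the hypothesis $BQB^*\ge c\,\Id$,
\benn
\langle V_\infty u,u\rangle=\sigma^2\int_0^\infty\txte^{2pt}\,\langle BQB^*\,\txte^{t\cA^*}u,\,\txte^{t\cA^*}u\rangle\,\txtd t\;\ge\;c\,\sigma^2\int_0^\infty\txte^{2pt}\,\|\txte^{t\cA^*}u\|^2\,\txtd t.
\eenn
As $p\uparrow0$ the factor $\txte^{2pt}$ increases monotonically to $1$ for each fixed $t\ge0$, so by monotone convergence the last integral tends to $\int_0^\infty\|\txte^{t\cA^*}u\|^2\,\txtd t=+\infty$. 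Hence $\lim_{p\to0^-}\langle V_\infty u,u\rangle=+\infty$, which is the assertion.

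The step I expect to be the genuine obstacle, and the reason for the detour above, is that the direct imitation of the discrete-spectrum proofs fails. If one takes $\{u_k\}$ to be a Weyl approximating sequence for $\lambda_*$ from Theorem~\ref{thm:weyl} and feeds it into the Lyapunov equation as though $\cA^* u_k=\bar\lambda_* u_k$, the error term $2\,\Re\langle V_\infty u_k,(\cA^*-\bar\lambda_*)u_k\rangle$ cannot be discarded: for example, if $\lambda_*$ is a limit of genuine eigenvalues of $\cA$, a single Weyl vector decays exponentially on a long but finite time scale, so $\langle V_\infty u_k,u_k\rangle$ stays \emph{bounded} as $p\to0^-$, contradicting the formal computation. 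Divergence is really a feature of suitably ``spread-out'' vectors, and non-exponential stability is exactly what produces one; if one wishes to remain close to the Weyl machinery, one may instead construct $u$ directly as an $\ell^2$-combination $\sum_nc_nw_n$ of (essentially orthogonal) Weyl vectors $w_n$ for $\cA^*$ at $\bar\lambda_*$ whose errors $\delta_n:=\|\cA^*w_n-\bar\lambda_*w_n\|$ shrink fast relative to the $c_n$, using $\|\txte^{t\cA^*}w_n\|\ge\tfrac12$ on $0\le t\lesssim\delta_n^{-1}$ to force $\int_0^\infty\|\txte^{t\cA^*}u\|^2\,\txtd t=+\infty$ — which is just the Datko--Pazy fact in disguise.
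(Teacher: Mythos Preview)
Your proof is correct and takes a genuinely different route from the paper's. The paper stays with the Weyl machinery you set aside: it takes a Weyl sequence $\{u_k\}$ for $\lambda_*$, feeds it into the Lyapunov equation, and then splits into two cases according to whether $\|V_\infty u_k\|$ stays bounded in $k$. In the bounded case the error terms $\langle V_\infty u_k,e_k\rangle$ are controlled by $C/k$, giving the explicit rate $\cO(1/p)$ for all sufficiently large $k$; in the unbounded case a subsequence already has $\langle V_\infty u_{k_j},u_{k_j}\rangle\to\infty$.

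By contrast you bypass Weyl entirely: from $s(\cA)=0$ and the standing exponential-decay assumption you get $\omega_0(\cA)=\omega_0(\cA^*)=0$, invoke Datko--Pazy to produce a single $u$ with $\int_0^\infty\|\txte^{t\cA^*}u\|^2\,\txtd t=\infty$, and finish with the integral formula for $V_\infty$, the coercivity $BQB^*\ge c$, and monotone convergence. This is shorter and sidesteps the delicate case split --- in particular the question of whether the bound $C$ in the paper's first case is uniform in $p$, which is needed but not entirely transparent there. What the paper's approach buys is extra structure: it yields a genuine (non-constant) sequence of approximate eigenfunctions along which the covariance blows up, and in the bounded case an explicit scaling law $\cO(1/p)$, which is the paper's practical payoff for warning signs. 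Your argument gives only a constant sequence and no rate, though as you observe the theorem as stated asks for no more. Your closing paragraph correctly diagnoses why a \emph{fixed} Weyl vector need not work --- exactly the reason the paper has to argue via the full sequence and a dichotomy.
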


\begin{proof}
Since $\lambda_* \in \partial \textnormal{spec}(\cA)$, by Weyl's criterion 
there exists a sequence $\{u_k \}_{k \in \mathbb{N}}$ s.t.~
\benn
\lim _{k\to \infty } \Vert \cA u_{k}-\lambda_* u_{k} \Vert =0, 
\; \; \Vert u_k \Vert = 1. 
\eenn
Our aim is to find a subsequence of $\{u_k\}$ that satisfies the claim. 
Define $e_k := \cA u_{k}-\lambda_* u_{k}$, $\bar{e}_k := \cA^* u_{k}-
\bar{\lambda}_* u_{k}$. Note that we have
\benn
\lim _{k\to \infty } \Vert e_{k} \Vert =0, \; \;
 \lim _{k\to \infty } \Vert \bar{e}_{k} \Vert =0. 
\eenn
As usual, the Lyapunov equation gives
\begin{equation*}
\langle (p+\cA)V_\infty u_k,u_k \rangle 
+ \langle V_\infty (p+\cA)^* u_k, u_k \rangle 
= - \sigma ^2 \langle BQB^* u_k, u_k \rangle.
\end{equation*}
Then, we can compute:
\beann
\langle (p+\cA)V_\infty u_k,u_k \rangle + \langle V_\infty (p+\cA)^* 
u_k, u_k \rangle &=&
2p \langle V_\infty u_k,u_k \rangle + \lambda_* \langle V_\infty 
u_k,u_k \rangle \\
&&+ \langle V_\infty u_k, e_k \rangle + \bar{\lambda}_* \langle 
V_\infty u_k, u_k \rangle + \langle \bar{e}_k, V_\infty u_k \rangle \\
&=& 2p \langle V_\infty u_k, u_k \rangle + \langle V_\infty u_k, 
e_k \rangle + \langle \bar{e}_k, V_\infty u_k \rangle.
\eeann
Therefore, we conclude that
\benn
\langle V_\infty u_k, u_k \rangle = \frac{ - \langle V_\infty u_k, 
e_k \rangle - \langle \bar{e}_k, V_\infty u_k \rangle - \sigma ^2 
\langle BQB^* u_k, u_k \rangle}{2p }.
\eenn
We will now consider two cases: first assume $\Vert V_\infty u_k \Vert < C$ 
for some $C > 0$ (uniformly in $k$). Then, by eventually discarding some of 
the pairs $(e_k, \bar{e}_k)$, we can also assume the bounds 
\begin{equation*}
\Vert e_k \Vert < \frac{1}{k}, \; \; \; \Vert \bar{e}_k \Vert < \frac{1}{k}
\end{equation*}
Together, this gives
$$ | \langle V_\infty u_k, e_k \rangle | \leq \Vert V_\infty 
u_k \Vert \Vert e_k \Vert \leq \frac{C}{k}.$$
And therefore:
\begin{align*}
\langle V_\infty u_k, u_k \rangle & \leq \frac{ 
| \langle V_\infty u_k, e_k \rangle | + | \langle 
\bar{e}_k, V_\infty u_k \rangle | - \sigma ^2 
\langle BQB^* u_k, u_k \rangle}{2p } \\
&\leq \frac{ 2C/k - \sigma ^2 \langle BQB^* u_k, 
u_k \rangle}{2p } \leq \frac{ 2C/k - \sigma ^2 c}{2p}.
\end{align*}
The sequence $\{u_k\}_{k > 2C/\sigma^2 c}$ satisfies the claim 
of the theorem. In this case the stronger result 
\benn
\langle V_\infty u_k, u_k \rangle = \cO\left(\frac{1}{p} \right), \quad
\textrm{ as } p \rightarrow 0^-
\eenn
holds. Suppose now $\Vert V_\infty u_k \Vert < C$ does not hold for 
any $C > 0$: then, there exists a subsequence $\{u_{k_j}\} \subset 
\{u_k\}$  such that $\Vert V_\infty u_{k_j} \Vert \geq j$. But this 
implies that also $ \langle V_\infty u_{k_j}, u_{k_j} \rangle = 
\Vert \sqrt{V_\infty} u_{k_j} \Vert^2 \rightarrow \infty 
\textrm{ as } j \rightarrow \infty $, which implies the claim. 
\end{proof}

We conclude with some remarks. Note that, in the previous sections, we 
were not only able to prove that some component of the variance diverges, 
but also to compute its rate of divergence. In the last proof we had to 
exclude the possibility that $\Vert V_\infty u_k \Vert$ has a divergent 
subsequence. If such a subsequence exists, the variance still diverges, 
but its rate of divergence is in general unknown. Therefore, we cannot 
conclude as before. Nevertheless, we observe that if we replace the 
genericity condition $\langle BQB^* u_{k^*}, u_{k^*} \rangle \neq 0 $ 
with the assumption 
\benn
\langle V_\infty u_k, e_k \rangle + \langle \bar{e}_k, V_\infty 
u_k \rangle + \sigma ^2 \langle BQB^* u_k, u_k \rangle \neq 0 \quad 
\textrm{ for infinite values of } k, 
\eenn
we can indeed conclude that there exists a sequence such that 
\benn
\langle V_\infty u_k, u_k \rangle = \cO\left(\frac{1}{p} \right),\quad
 \textrm{ as } p \rightarrow 0^-.
\eenn
Furthermore, Theorem~\ref{thm:generalspectrum} is actually stronger than the 
results obtained previously. Indeed it shows that, under some assumptions, there 
exists a \emph{whole sequence} of ``approximate eigenfunctions'' such that 
the components of the covariance operator along this sequence diverge. Of 
course, such a sequence might be constant, as it is the classical case for 
discrete spectrum, which occurs for many differential operators on bounded 
domains.

As another remark, suppose we only look for one vector $u \in H$ such 
that 
\benn
\langle V_\infty u, u \rangle = \cO\left(\frac{1}{p} \right), \quad
\textrm{ as } p \rightarrow 0^-.
\eenn
Then, to obtain the claim on the asymptotic limit of the covariance operator, 
we only have to require that there exists a $k$ such that 
\benn
\langle V_\infty u_k, e_k \rangle + \langle \bar{e}_k, 
V_\infty u_k \rangle + \sigma ^2 \langle BQB^* u_k, u_k \rangle \neq 0,
\eenn
which is a much weaker condition. 

\bibliographystyle{plain}
\bibliography{../my_refs}

\end{document}